\documentclass[12pt]{amsart}
\pdfoutput=1
\usepackage{amsmath, amssymb}
\usepackage{amsfonts}
\usepackage{mathrsfs}
\usepackage[arrow,matrix,curve,cmtip,ps]{xy}
\usepackage{graphicx}
\usepackage{amsthm}
\usepackage{float}
\usepackage[margin=1.4 in]{geometry}
\usepackage{amsthm}
\usepackage[utf8]{inputenc}
\usepackage[T1]{fontenc}
\usepackage{mathtools}

\allowdisplaybreaks

\newtheorem{theorem}{Theorem}[section]
\newtheorem{lemma}[theorem]{Lemma}

\newtheorem{corollary}[theorem]{Corollary}

\newtheorem*{theorem*}{Theorem}
\theoremstyle{remark}
\newtheorem{remark}[theorem]{Remark}

\newtheorem{question}{Question}

\numberwithin{equation}{section}



\begin{document}
\title[Bounds on $k$-systems via the mapping class group]{  local geometry of the $k$-curve graph }

\author{Tarik Aougab}

\address{Department of Mathematics \\ Brown University \\ 151 Thayer Street, Providence, RI 02902 \\ USA}

\date{\today}

\keywords{Curves on surfaces, Curve systems, Mapping class group, Teichm{\"u}ller space}

\begin{abstract}

Let $S$ be an orientable surface with negative Euler characteristic. For $k \in \mathbb{N}$, let $\mathcal{C}_{k}(S)$ denote the \textit{k-curve graph}, whose vertices are isotopy classes of essential simple closed curves on $S$, and whose edges correspond to pairs of curves that can be realized to intersect at most $k$ times. The theme of this paper is that the geometry of Teichm{\"u}ller space and of the mapping class group captures local combinatorial properties of $\mathcal{C}_{k}(S)$, for large $k$. Using techniques for measuring distance in Teichm{\"u}ller space, we obtain upper bounds on the following three quantities for large $k$: the clique number of $\mathcal{C}_{k}(S)$ (exponential in $k$, which improves on previous bounds of \cite{JuMalMo} and \cite{Prz} and which is essentially sharp); the maximum size of the intersection, whenever it is finite, of a pair of links in $\mathcal{C}_{k}$ (quasi-polynomial in $k$); and the diameter in $\mathcal{C}_{0}(S)$ of a large clique of $\mathcal{C}_{k}(S)$ (uniformly bounded). As an application, we obtain quasi-polynomial upper bounds, depending only on the topology of $S$, on the number of short simple closed geodesics on any square-tiled surface homeomorphic to $S$. 
\end{abstract}

\maketitle

\section{Introduction} Let $S$ be an orientable surface with negative Euler characteristic. The \textit{curve graph} of $S$, denoted $\mathcal{C}(S)$, is the graph whose vertices correspond to isotopy classes of essential simple closed curves on $S$, and such that there is an edge between isotopy classes that can be realized disjointly on $S$. The curve graph has deep connections to the geometry of Teichm{\"u}ller space $\mathcal{T}(S)$ and to the mapping class group $\mbox{Mod}(S)$. Indeed, as a metric space it is quasi-isometric to the \textit{electrified Teichm{\"u}ller space}-- the space obtained from $\mathcal{T}(S)$ equipped with the Teichm{\"u}ller metric by coning off, for each simple closed curve $\alpha$, the region associated to those hyperbolic surfaces on which $\alpha$ is very short (\cite{MasMin1}). Moreover, the group of simplicial automorphisms of $\mathcal{C}(S)$ is isomorphic to $\mbox{Mod}^{\pm}(S)$ (\cite{Ivanov}, \cite{Kork}, \cite{Luo}), the extended mapping class group. 

 In this paper, we consider for each $k \in \mathbb{N}$, a variant of $\mathcal{C}(S)$ called the \textit{k-curve graph} and denoted $\mathcal{C}_{k}(S)$: vertices are the same as $\mathcal{C}(S)$, and edges correspond to pairs of isotopy classes that can be realized with at most $k$ intersections. The large scale geometry of $\mathcal{C}_{k}(S)$ is well-understood, because it is quasi-isometric to the standard curve graph $\mathcal{C}(S)=\mathcal{C}_{0}(S)$. However, the local combinatorics of $\mathcal{C}_{k}(S)$, and how they depend on $k$, remain largely unexplored. The theme of this paper is that large scale geometric features of $\mathcal{T}(S)$ can be translated into local geometric features of $\mathcal{C}_{k}(S)$, when $k$ is large.

\subsection{Cliques in $\mathcal{C}_{k}(S)$.}

As motivation, we recall the following question, first popularized by Farb and Leininger:

\begin{question} \label{FL} As a function of $S$, what is the largest size of a collection $\Omega$ of pairwise non-homotopic, essential simple closed curves, such that no two curves in $\Omega$ intersect more than once?
\end{question}

Question \ref{FL} is surprisingly challenging and remains open, although progress has been made towards its resolution. Most recently, Przytycki has shown that any such $\Omega$ has size bounded above by an explicit function that grows as a cubic polynomial in $|\chi(S)|$ (\cite{Prz}). On the other hand, it is not difficult to construct sequences of such collections whose cardinalities grow quadratically in $|\chi(S)|$ (\cite{Aoug1}, \cite{MalRivTher}). A natural generalization of Question \ref{FL} is to ask, as a function of $k \in \mathbb{N}$, for the largest size of a collection of (pairwise non-homotopic) simple closed curves, pairwise intersecting at most $k$ times. We call such a collection of curves a \textit{k-system}.

Indeed, Przyticky's bounds apply to this generalization, and in particular his result states that the maximum size of a $k$-system grows at most as a polynomial in $|\chi(S)|$ of degree $k^{2}+k+1$. Juvan-Malni\v{c}-Mohar have also considered this question, and when $k$ is large compared to $|\chi(S)|$, they show that such a collection has size roughly at most $k^{k}$ (\cite{JuMalMo}). 

Question \ref{FL} can be reinterpreted as asking for the largest size of a clique in $\mathcal{C}_{1}(S)$, and thus its generalization asks for the largest clique size in $\mathcal{C}_{k}(S)$. An \textit{n-clique} is a complete graph on $n$ vertices, and the \textit{clique number} of a graph $G$ is the supremum over all $n$ such that there exists an embedded $n$-clique in $G$. Our first result provides an upper bound for the clique number of $\mathcal{C}_{k}$, which when $k$ is large, outperforms the bounds from \cite{Prz} and from \cite{JuMalMo} (see subsection $1.4$ below for notation): \vspace{3 mm}

\textbf{Theorem \ref{KSYS}.}
\textit{Fix a surface $S$ with $\chi(S)<0$, and let $N_{S}(k)$ denote the clique number of $\mathcal{C}_{k}(S)$.  Then } 

\[ \log(N_{S}(k)) \prec k.  \]

Thus, for a fixed surface $S$, $N_{S}(k)$ grows at most exponentially as a function of $k$. In \cite{Aoug1} we showed, for each $g$, the existence of a complete subgraph of $\mathcal{C}_{k}(S_{g})$ whose size was on the order of $g^{k/2}$, where $S_{g}$ is the closed surface of genus $g$. It follows that Theorem \ref{KSYS} is essentially sharp.

\subsection{Intersections of links in $\mathcal{C}_{k}(S)$.}

Given a simple closed curve $\alpha$ on $S$, then the $k$-\textit{link} of $\alpha$, denoted $\mathcal{L}_{k}(\alpha)$, is the sphere of radius $1$ in $\mathcal{C}_{k}(S)$, centered at the vertex associated to $\alpha$. If $\alpha$ and $\beta$ \textit{fill} $S$, that is, if $S \setminus (\alpha \cup \beta)$ is a disjoint union of topological disks, boundary parallel annuli and once-punctured disks, then $|\mathcal{L}_{k}(\alpha) \cap \mathcal{L}_{k}(\beta)|$ is finite, for all $k$. On the other hand if $\alpha$ and $\beta$ do not fill, this intersection can easily be infinite. Our next result states that for fixed $k$, $|\mathcal{L}_{k}(\alpha) \cap \mathcal{L}_{k}(\beta)|$ is \textit{uniformly} bounded over all choices of filling pairs $\alpha, \beta$, and furthermore that this bound grows at most quasi-polynomially in $k$. A function $f: \mathbb{N} \rightarrow \mathbb{N}$ grows \textit{at most quasi-polynomially} if there exists some positive $c, \lambda \geq 1$ so that 
\[ f(n) \leq 2^{(\lambda \cdot \log(n))^{c}}. \]
When $c= 1$ the left hand side is bounded above by a polynomial of degree $\lceil \lambda \rceil$. 
\vspace{2 mm}

\textbf{Theorem \ref{KLink}.} 
\textit{There exists a function $r_{S}(k)$ depending only on the topology of $S$, which grows at most quasi-polynomially and which satisfies the following. Let $\alpha, \beta$ be simple closed curves on $S$ which fill $S$, and let $\mathcal{L}_{k}(\alpha)$ denote the set of all vertices in $\mathcal{C}_{k}(S)$ that are distance $1$ from $\alpha$. Then $|\mathcal{L}_{k}(\alpha) \cap \mathcal{L}_{k}(\beta)|\leq r_{S}(k)$.  }

We remark that our methods also prove Theorem \ref{KLink} when $\alpha$ and $\beta$ are allowed to be multi-curves: there exists a uniform upper bound on the numer of simple closed curves intersecting a filling multi-curve pair at most $k$ times, and this bound grows at most sub-exponentially in $k$.

In \cite{Aoug1} we asked whether large $k$-systems project to small diameter subsets of the curve graph $\mathcal{C}_{0}(S)$. In particular, if $i(\alpha, \beta) \leq k$, $\alpha$ and $\beta$ are at most roughly $\log(k)$ apart in the curve graph, and we asked if there exists large $k$-systems obtaining this theoretical upper bound on diameter in $\mathcal{C}_{0}(S)$. Theorem \ref{KLink} answers this in the negative; indeed, any large $k$-system projects to a diameter $2$ subset of $\mathcal{C}_{0}(S)$:

\textbf{Corollary \ref{CGDiam}.}
\textit{Let $\Omega$ be a $k$-system on $S$ with $|\Omega|= N_{k}(S)$. Then for all sufficiently large $k$, $\Omega$ projects to a subset of the curve graph of diameter $2$.}

\textit{Proof of Corollary \ref{CGDiam}}: Suppose $\Omega$ contains a pair of curves $\alpha, \beta$ that are distance $3$ or more in the curve graph. Then $\alpha$ and $\beta$ fill $S$, and $\Omega$ is contained in the intersection $\mathcal{L}_{k}(\alpha) \cap \mathcal{L}_{k}(\beta)$. Therefore by Theorem \ref{KLink}, $|\Omega|$ is bounded above by a sub-exponential function of $k$. However, in \cite{Aoug1} we constructed $k$-systems with cardinality bounded below by an exponential function of $k$, and therefore for $k$ sufficiently large, $|\Omega| < N_{k}(S).$ $\Box$

\subsection{Unit-Square tiled surfaces.} In this section, we assume $S$ is a closed surface. Another application of Theorem \ref{KLink} is to bound the number of short simple closed curves on square-tiled surfaces homeomorphic to $S$. Explicitly, a \textit{unit-square tiled surface} is a flat surface $\mathcal{S}$ obtained by gluing together finitely many copies of the unit square in $\mathbb{C}$, such that:
\begin{enumerate}
\item vertical edges glue to vertical edges, and similarly horizontal edges glue to horizontal edges;
\item Each vertex is adjacent to at least $4$ squares after the gluing;
\item the resulting surface is orientable. 
\end{enumerate} 

In particular, we do not require that a left-hand vertical edge glues to a right-hand vertical edge, or that a top horizontal edge glues to a bottom one. Each unit-square tiled surface is conformally equivalent to an area $1$ singular flat surface- a unit area surface with a flat metric away from finitely many singularities- obtained by scaling down the area of each square to one with area equal to the reciprocal of the total number of squares in the tiling. We also note that the set of all unit-square tiled surfaces homeomorphic to $S$ projects to a dense subset of $\mathcal{M}(S)$, the moduli space of complete finite volume hyperbolic metrics on $S$, where a square-tiled surface maps to the unique hyperbolic structure in its conformal class.

 In the special case that top edges glue to bottom edges and left edges glue to right ones, the resulting surface is a branched cover of the torus and is conformally equivalent to a so-called \textit{origami}, a type of translation surface whose $SL(2,\mathbb{R})$ orbit has important dynamical and algebro-geometric properties (\cite{HubLe}, \cite{Smit}). 

 If $\mathcal{S}$ is a unit-square tiled surface, let $N_{\mathcal{S}}(L)$ denote the number of homotopy classes of simple closed curves admitting a representative on $\mathcal{S}$ with length at most $L$. Finally, let $\mathcal{X}(S)$ denote the set of all unit-square tiled surfaces whose underlying topology is that of $S$. 

\textbf{Corollary \ref{Sqtile}.} \textit{For $S$ a closed surface, there exists a function $P_{S}$ which grows at most quasi-polynomially, such that }
\[ \sup\left\{N_{\mathcal{S}}(L): \mathcal{S} \in \mathcal{X}(S) \right\} \leq P_{S}(L).\]

\begin{figure}[H]
\centering
	\includegraphics[width=1.9 in]{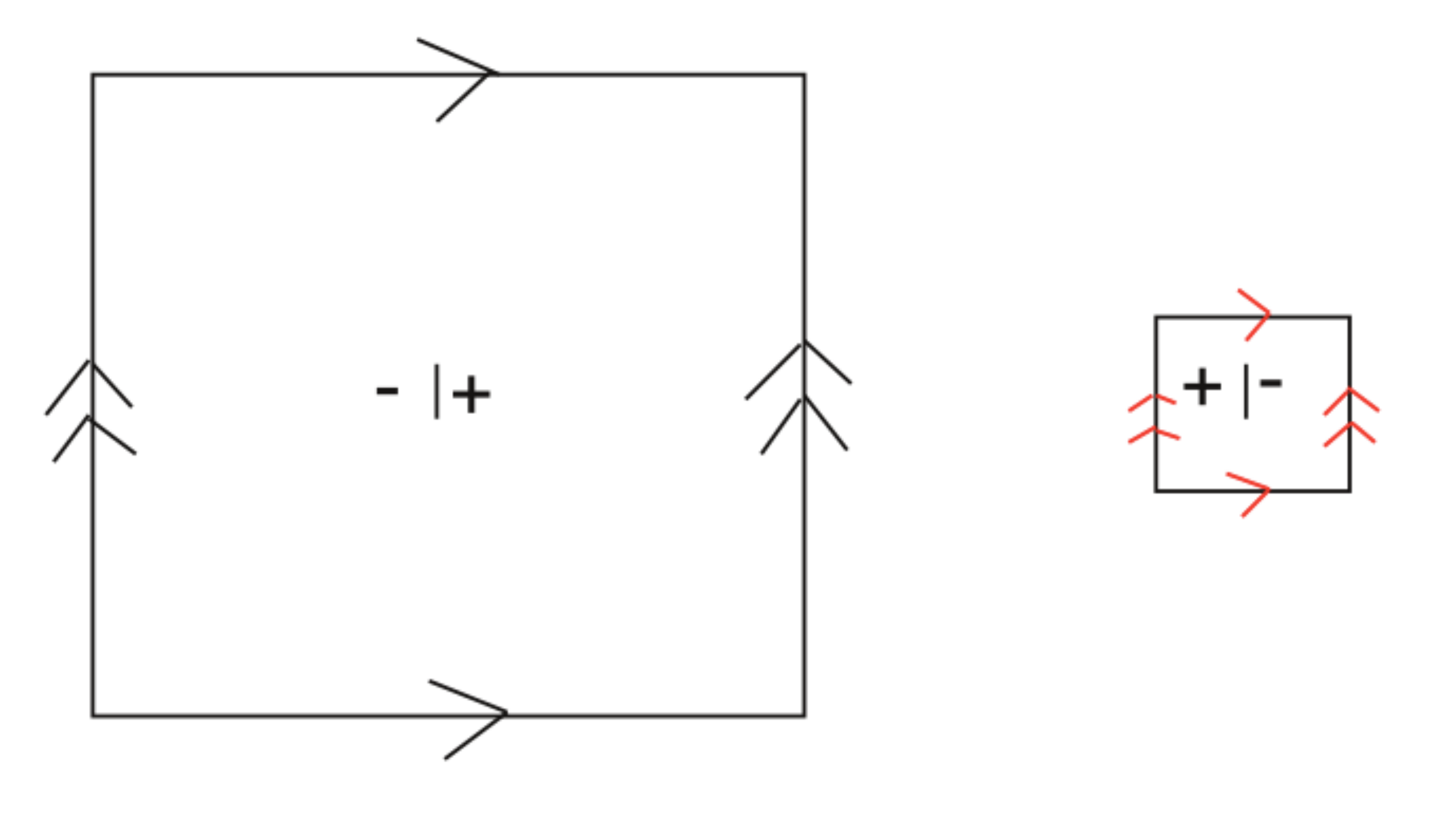}
\caption{A genus $2$ singular flat surface, consisting of two flat tori glued together via a small slit on the interior of each torus. The left hand side of the left slit is glued to the right hand side of the right slit. The area of the small torus is $\epsilon$ and the area of the larger is $1-\epsilon$. For $\epsilon$ very small, there will be many homotopy classes of simple closed curves with representatives on the smaller torus, all with short lengths.}
\end{figure}

\begin{remark} \label{ExplainingTile}
That there exists a polynomial upper bound of degree $\dim(\mathcal{T}(S))$ for $N_{\mathcal{S}}(L)$, for a \textit{fixed} unit-square tiled surface $\mathcal{S}$, follows from work of Rivin \cite{Riv} and also Mirzakhani \cite{Mirz}. However, this does not necessarily imply a uniform quasi-polynomial bound over all $\mathcal{X}(S)$. Indeed, given a fixed hyperbolic metric $\sigma$ on $S$, the number $N_{\sigma}(L)$ of simple closed geodesics of length $\leq L$, also satisfies a polynomial upper bound of degree $\dim(\mathcal{T}(S))$ (again by \cite{Riv} and \cite{Mirz}), but a \textit{uniform} upper bound for $N_{\sigma}(L)$, taken over all hyperbolic metrics $\sigma$, necessarily grows exponentially in $L$ (see for instance inequality $(3.14)$ of Proposition $3.6$ of \cite{Mirz}). 
There is also a polynomial upper bound for the number of (homotopy classes of) simple closed geodesics of length $\leq L$ for any fixed unit-area singular flat metric on $S$. However there can be no uniform bound (sub-exponential or otherwise) over all unit-area singular flat metrics on $S$, as the Figure 1 demonstrates.

\end{remark}

\textit{Proof of Corollary \ref{Sqtile}}: Given $\mathcal{S} \in \mathcal{X}(S)$, consider its \textit{vertical and horizontal curves} $v$ and $h$: $v$ (resp. $h$) is the multi-curve obtained by concatenating all vertical (resp. horizontal) midsegments of squares. The requirement that no vertical edges glue to horizontal edges guarantees that $v$ and $h$ are both multi-curves (potentially with many parallel components) and are distinct from each other. 

If $\alpha$ is a minimum length representative of a simple closed homotopy class on $\mathcal{S}$, then at the cost of increasing length by a factor of at most $\sqrt{2}$, we can homotope $\alpha$ so that it lies on the $1$-skeleton of $\mathcal{S}$. Therefore the length of $\alpha$ is roughly equal to the number of times it intersects both $v$ and $h$. We can assume that any two components of $v$ (resp. $h$) are not homotopic to each other, since deleting parallel components only reduces intersection number with other curves. Hence $N_{\mathcal{S}}(L)$ is bounded above by 
\[ |\mathcal{L}_{\lceil \sqrt{2}L \rceil}(v) \cap \mathcal{L}_{\lceil \sqrt{2} L \rceil}(h) |, \]
and thus the corollary follows by applying Theorem \ref{KLink}. $\Box$.

\subsection{Methods.}  We prove Theorems \ref{KSYS} and \ref{KLink} by appealing to the geometry of the mapping class group $\mbox{Mod}(S)$ and of Teichm{\"u}ller space $\mathcal{T}(S)$. 

\textbf{Idea for proof of Theorem \ref{KSYS} }: Given a $k$-system $\Gamma$, we construct an injective map from $\Gamma$ into a Cayley graph for $\mbox{Mod}(S)$ (with respect to a fixed finite generating set), such that the image of $\Gamma$ is contained in a ball of radius roughly $k$. Since $\mbox{Mod}(S)$ has exponential growth, the bound on $|\Gamma|$ follows. To accomplish this, we use formulas for measuring distance in $\mbox{Mod}(S)$, and also for relating intersection number to ``subsurface projections'', due to Masur-Minsky (\cite{MasMin2}) and Choi-Rafi (\cite{RafChoi}) respectively. 

\textbf{Idea for proof of Theorem \ref{KLink} }: Given a filling (multi-curve) pair $\alpha, \beta$, the objective is to bound the number of simple closed curves intersecting both $\alpha, \beta$ at most $k$ times. To do this, we use an estimate due to Choi-Rafi (\cite{RafChoi}) for measuring distance in the thick part of $\mathcal{T}(S)$ to find a thick hyperbolic surface $\sigma$ on which both $\alpha$ and $\beta$ have short representatives. It follows that the length of a curve on $\sigma$ is comparable to the number of times it intersects both $\alpha$ and $\beta$, and thus we complete the proof of Theorem \ref{KLink} by appealing to estimates of Rivin for the number of short simple closed geodesics on a thick hyperbolic surface (\cite{Riv}). 

\subsection{Notation and terminology.}  Given two quantities (or functions) $f, g$, by $f \asymp_{C} g$ we mean 
\begin{equation} \label{Coarse}
 \frac{1}{C}f - C \leq g  \leq C \cdot f + C.
\end{equation}
Generally the quantities $f,g$ will be functions of $k$ and $S$, or over (pairs of) simple closed curves, such as intersection number, or perhaps distance functions over $\mbox{Mod}(S)$ or $\mathcal{T}(S)$; moreover the constant $C$ will depend only on the topology of the underlying surface $S$. When the explicit constant $C$ is not of interest, we will suppress it by using the notation $f \asymp g$, meaning there exists some constant $C$ such that $f \asymp_{C} g$. 

By $f \prec g$ (respectively $f \succ g)$, we mean that there exists a constant $C$ such that the right-hand (resp. left-hand) inequality of (\ref{Coarse}) holds. We say $f$ is \textit{coarsely less than} or \textit{coarsely at most} $g$ to mean $f \prec g$, and that $f$ and $g$ are \textit{coarsely equal} if $f \asymp g$. 

By $f \prec^{+} g$, or $f \asymp^{+} g$, we mean that there exists some $C$ so that 

\[ f \leq g+ C, \]
or 
\[ f- C \leq g \leq f+ C, \]
respectively. 

Finally, all logarithms are base $2$. 

\subsection{Acknowledgements.} The author thanks Jayadev Athreya, Jeffrey Brock, Yair Minsky, Priyam Patel, Jenya Sapir, and Samuel Taylor for numerous insightful conversations. A weaker version of Theorem \ref{KSYS} appeared in the author's Ph.D thesis, and thus the author also thanks Andrew Casson and Dan Margalit for reading through the proof of this version of the theorem and for many helpful comments. The author was partially supported by NSF grants DMS 1005973, 1311844, and by NSF postdoctoral fellowship grant DMS 1502623.

\section{preliminaries}

\subsection{Curves and arcs.} A \textit{simple closed curve} on a surface $S$ is the image of an embedding $\phi :S^{1} \rightarrow S$. A curve is \textit{essential} if it is not homotopically trivial, and not homotopic into a neighborhood of a puncture, or parallel to a boundary component. A \textit{simple arc} on $S$ is either an embedding $\psi: (0,1) \rightarrow S$ with $\lim_{t\rightarrow 0} \psi(t), \lim_{t\rightarrow 1}\psi(t)$ coinciding with punctures, or an embedding of $[0,1]$ with endpoints lying on boundary components. A simple arc is \textit{essential} if it can not be homotoped to lie within a neighborhood of a puncture or boundary component. Homotopy between arcs is not required to fix boundary components point-wise.

A multi-curve (or multi-arc) is a disjoint union of simple closed curves (respectively arcs). 

 Given two homotopy classes of curves or arcs $\alpha, \beta$, their \textit{geometric intersection number} denoted $i(\alpha, \beta)$, is defined as 

\[ i(\alpha, \beta)= \min_{x \sim \alpha, y \sim \beta} |x \cap y|, \]

where $\sim$ denotes homotopy. If curves $\alpha, \beta$ achieve the geometric intersection number associated to the corresponding pair of homotopy classes, we say they are in \textit{minimal position}. A pair of simple closed curves $\alpha, \beta$ are in minimal position if and only if no connected component of $S \setminus (\alpha \cup \beta)$ is a bigon, which is a simply connected region bounded by one arc of $\alpha$ and one of $\beta$ (see section $1.2.4$ of \cite{Far-Mar}). 

A \textit{curve system} is a collection of pairwise non-homotopic, essential simple closed curves on $S$. If $\Lambda, \Gamma$ are two curve systems, we say $\Lambda$ is homotopic to $\Gamma$ as curve collections if there is a bijection from $\Lambda$ to $\Gamma$ such that the image of each curve in $\Lambda$ is homotopic to it. Then we define the geometric intersection number $i(\Gamma, \Gamma')$ by 

\[ i(\Gamma, \Gamma')= \sum_{\gamma \in \Gamma, \gamma' \in \Gamma'}i(\gamma, \gamma'). \]

A collection $\left\{\alpha_{1},..., \alpha_{n} \right\}$ of curves is said to \textit{fill} $S$ if $S \setminus \bigcup_{i} \alpha_{i}$ is a disjoint union of topological disks, once-punctured disks and boundary parallel annuli. 

\subsection{The mapping class group and the curve graph} The \textit{mapping class group} of $S$, denoted $\mbox{Mod}(S)$, is the group of orientation preserving homeomorphisms of $S$ fixing the boundary point-wise, up to isotopy. The \textit{extended mapping class group}, denoted $\mbox{Mod}^{\pm}(S)$, is the group of all isotopy classes of homeomorphisms (orientation preserving or reversing) of $S$ fixing the boundary point-wise. 

If $S$ is not a twice punctured disk, a torus with either $0$ or $1$ punctures, or a $4$-punctured sphere (or any of these surfaces with boundary components replacing some of the punctures), then the \textit{curve graph} of $S$, denoted $\mathcal{C}(S)$, is the graph whose vertices correspond to isotopy classes of essential simple closed curves on $S$, and two vertices span an edge exactly when the corresponding isotopy classes can be realized disjointly on $S$. Note that the curve graph of the $3$-holed sphere is the empty graph. If $S$ is a torus with $0$ or $1$ punctures, then adjacency in $\mathcal{C}(S)$ corresponds to simple closed curves intersecting once; if $S$ is the $4$-holed sphere, then adjacency corresponds to curves intersecting twice. 

Finally, if $S$ is the annulus, identify $S$ with the quotient of the hyperbolic plane by the action of an infinite cyclic subgroup of $PSL(2, \mathbb{R})$ generated by a hyperbolic matrix. Thus $S$ is homeomorphic to an open cylinder, and it admits a compactification $\bar{S}$ so that $S$ is homeomorphic to the interior of $\bar{S}$. Then vertices of $\mathcal{C}(S)$ correspond to geodesic simple arcs on $\bar{S}$ running from one boundary component to the other, and adjacency corresponds to disjointness. The curve graph is made into a metric space by identifying each edge with $[0,1]$. Let $d_{S}( , )$ denote distance in $\mathcal{C}(S)$. The curve graph admits an isometric (but not properly discontinuous) action of $\mbox{Mod}^{\pm}(S)$. 

Define $\mathcal{AC}(S)$, the \textit{arc and curve graph of $S$} to be the graph whose vertices correspond to isotopy classes of essential simple closed curves and arcs on $S$. As with $\mathcal{C}(S)$, two vertices are connected by an edge if and only if the corresponding isotopy classes can be realized disjointly. 

By a simple surgery argument, distance in $\mathcal{C}(S)$ is bounded above by a logarithmic function of intersection number (\cite{Hemp}, \cite{Lik}); given any two simple closed curves $\alpha, \beta$, 
\begin{equation} \label{Hempel}
d_{S}(\alpha, \beta) \leq 2 \log(i(\alpha, \beta)) +2.
\end{equation}

\subsection{Subsurface projections} A \textit{non-annular subsurface} $Y$ of $S$ is the closure of a complementary component of an essential multi-curve on $S$; an \textit{annular subsurface} $Y\subseteq S$ is a closed neighborhood of an essential simple closed curve on $S$, homeomorphic to $[0,1] \times S^{1}$. A subsurface is \textit{essential} if its boundary components are all essential curves, and it is not homeomorphic to a sphere with the sum of boundary components and punctures at most $3$. 

 Let $Y \subseteq S$ be an essential subsurface of $S$. Then there is a covering space $S^{Y}$ associated to the inclusion $\pi_{1}(Y)<\pi_{1}(S)$. While $S^{Y}$ is not compact, note that the Gromov compactification of $S^{Y}$ is homeomorphic to $Y$, and via this homeomorphism we identify $\mathcal{AC}(Y)$ with $\mathcal{AC}\left(S^{Y}\right)$. Then, given $\alpha \in \mathcal{AC}^{0}(S)$, we obtain a map $\pi_{Y}:\mathcal{AC}(S)\rightarrow \mathcal{AC}(Y)$ defined by setting $\pi_{Y}(\alpha)$ equal to its preimage under the covering map $S^{Y}\rightarrow S$. 

Technically, this defines a map from $\mathcal{AC}^{0}(S)$ into $2^{\mathcal{AC}^{0}(Y)}$ since their may be multiple connected components of the pre-image of a curve or arc, but the image of any point in the domain is a bounded subset of the range. Thus to make $\pi_{Y}$ a map we can simply choose some component of this pre-image for each point in the domain.

When $S$ is not an annulus, given an arc $a\in \mathcal{AC}(S)$, there is a closely related simple closed curve $\tau(a) \in \mathcal{C}(S)$, obtained from $a$ by surgering along the boundary components that $a$ meets. More concretely, let $\mathcal{N}(a)$ denote a thickening of the union of $a$ together with the (at most two) boundary components of $S$ that $a$ meets, and define $\tau(a)\in 2^{\mathcal{C}^{1}(S)}$ to be the essential components of $\partial(N(a))$. 

Thus we obtain a \textit{subsurface projection} map 
\[\psi_{Y}:= \tau \circ \pi_{Y}: \mathcal{C}(S)\rightarrow \mathcal{C}(Y)\]
 for $Y\subseteq S$ any essential subsurface. In practice, to obtain $\psi_{Y}(\alpha)$, consider the intersection of $\alpha$ with $Y$. If $\alpha$ is contained completely within $Y$, we define $\psi_{Y}(\alpha)= \alpha$; if $\alpha \cap Y = \emptyset$, then the projection $\psi_{Y}(\alpha)$ is undefined. Finally, if $\alpha \cap Y$ consists of a collection of arcs, define $\psi_{Y}(\alpha)$ to be the curves obtained by surgering those arcs via the process described in the previous paragraph. Note also that if $A$ is an annulus and $c$ is its core curve, the projection $\psi_{A}(c)$ is not defined. 

Then given $\alpha,\beta \in \mathcal{C}(S)$, define $d_{Y}(\alpha,\beta)$ by
\[ d_{Y}(\alpha,\beta):= \mbox{diam}_{\mathcal{C}(Y)}(\psi_{Y}(\alpha)\cup \psi_{Y}(\beta)).\]
We note that $\psi_{Y}$ is coarsely Lipschitz (see Lemma $2.3$ of \cite{MasMin2}):
\[ d_{Y}(\alpha, \beta) \leq 2d_{S}(\alpha, \beta)+ 2, \]
and furthermore, 
\begin{equation}\label{ProjInt}
 i(\alpha, \beta) \leq 2\cdot i(\psi_{Y}(\alpha), \psi_{Y}(\beta)) + 4.
\end{equation}

\subsection{The marking graph and the Masur-Minsky distance formula} Given $S$, let $P= \left\{\alpha_{1},...,\alpha_{n}\right\}$ be a complete subgraph of $\mathcal{C}(S)$. Then a \textit{marking} $\mu= \left\{\beta_{1},...,\beta_{n}\right\}$ is a certain decoration of $P$. That is, either $\beta_{i}= \alpha_{i}$, or $\beta_{i}= (\alpha_{i}, t_{i})$, where $t_{i}$ is a choice of \textit{transversal} for $\alpha_{i}$, meaning that $t_{i}$ is a diameter-$1$ set of vertices of the annular curve graph $\mathcal{C}(\alpha_{i})$. $P$ is called the \textit{base} of $\mu$. 

The marking $\mu$ is called \textit{complete} if $P$ is a pants decomposition, and every curve has a transversal, and $\mu$ is called \textit{clean} if each transversal $t_{i}$ is of the form $ \psi_{\alpha_{i}}(\gamma_{i})$, where 

\begin{enumerate}
\item $i(\gamma_{i}, \alpha_{i})=1$ and a regular neighborhood of $\gamma_{i} \cup \alpha_{i}$ is a torus with one boundary component; or
\item $i(\gamma_{i}, \alpha_{i})= 2$ and a regular neighborhood of $\gamma_{i} \cup \alpha_{i}$ is a sphere with $4$ boundary components; and in either case we require that 
\item $\gamma_{i}$ is disjoint from $\alpha_{j}$ for any $j \neq i$. 
\end{enumerate} 

The curve $\gamma_{i}$ is called a \textit{clean transverse curve} to $\alpha_{i}$. While $\gamma_{i}$ can only intersect one base curve, we note that clean transverse curves can intersect many other clean transverse curves. The \textit{marking graph} $M(S)$ of $S$ is the graph whose vertices are complete clean markings on $S$, and whose edges are of the following two forms:

\begin{enumerate}

\item twist: $(\mu, \mu')$ where bases of $\mu$ and of $\mu'$ agree, and for exactly one $i$, $t'_{i}$ is obtained from $t_{i}$ by Dehn twisting once about $\alpha_{i}$ (in the case that $\alpha_{i}$ and its clean transverse curve fill a torus) or by half Dehn twisting once about $\alpha_{i}$ (in the case that $\alpha_{i}$ and its clean transverse curve fill a $4$-holed sphere); 

\item flip: $(\mu, \mu')$ where $\mu'$ is obtained from $\mu$ by exchanging the roles of clean transverse curve and base curve for exactly one $i$: $\gamma_{i}$ becomes a curve of the base of $\mu'$ and $\alpha_{i}$ is its clean transverse curve. The new base is indeed a multi-curve because $\gamma_{i}$ does not intersect any of the other base curves. However, because clean transverse curves are allowed to intersect each other, the resulting complete marking may not be clean. Thus we replace the resulting marking with a clean marking that is \textit{compatible} with it, which is a clean marking that has the same base, and transversals are chosen to minimize distance with the original transversals in each annular curve graph. 

\end{enumerate}

There is a uniformly bounded number of choices for a clean marking that is compatible with a given complete marking. Moreover there is always at least one such choice, and the transversals of any cleaning are uniformly close, in the respective annular curve graphs, to those of the original complete marking (see Lemma $2.4$ of \cite{MasMin2}). Thus $M(S)$ is a locally finite, connected graph, admitting a properly discontinuous (but not free) isometric action of $\mbox{Mod}(S)$. The quotient of $M(S)$ by $\mbox{Mod}(S)$ is a finite graph, and therefore $M(S)$ is quasi-isometric to $\mbox{Mod}(S)$ with the word metric associated to any finite generating set. 

We can extend the subsurface projection operation to markings as follows. If $Y$ is an annulus whose core curve $\alpha$ is in the base of $\mu$, then we define $\psi_{Y}(\mu)$ to be the transversal to $\alpha$ in $\mu$ (if there is no transversal the projection is defined to be empty). Otherwise, $\psi_{Y}(\mu)$ is defined to be the usual subsurface projection of the base of $\mu$ to $Y$. 

The following formula, due to Masur and Minsky, allows for the computation of distance in $M(S)$ via subsurface projections (\cite{MasMin2}):

\begin{theorem} \label{SubSurface} There exists $D= D(S)$ such that for any $T>D$, the following holds. There exists $N$ such that for any $\mu_{1}, \mu_{2}$ complete clean markings, 

\[ d_{\mathcal{M}}(\mu_{1}, \mu_{2}) \asymp_{N} \sum_{Y \subseteq S} [[d_{Y}(\mu_{1}, \mu_{2})]]_{T}, \]

where $[[x]]_{T}= x$ for $x \geq T$ and $0$ otherwise.

\end{theorem}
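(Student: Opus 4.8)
The plan is to prove the two coarse inequalities
\[ \sum_{Y\subseteq S}[[d_Y(\mu_1,\mu_2)]]_T\;\prec\;d_{\mathcal{M}}(\mu_1,\mu_2)\qquad\text{and}\qquad d_{\mathcal{M}}(\mu_1,\mu_2)\;\prec\;\sum_{Y\subseteq S}[[d_Y(\mu_1,\mu_2)]]_T, \]
with all implied constants depending only on the topology of $S$, and with the threshold $T$ chosen large enough to swallow every additive error that appears below. The first inequality---a lower bound on $d_{\mathcal{M}}$ by the sum---will follow from the coarse Lipschitz behaviour of subsurface projection together with a bounded-overlap estimate for the ``active windows'' of distinct subsurfaces. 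The second---an upper bound on $d_{\mathcal{M}}$---will follow from building an explicit path from $\mu_1$ to $\mu_2$ by resolving a \emph{hierarchy of tight geodesics}.

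For the lower bound I would fix a geodesic $\mu_1=\nu_0,\nu_1,\dots,\nu_n=\mu_2$ in $\mathcal{M}(S)$, so that $n=d_{\mathcal{M}}(\mu_1,\mu_2)$. The first step is to observe that a single elementary move changes $\psi_Y$ of a marking by a uniformly bounded amount for \emph{every} essential $Y$: a twist fixes the base and alters one annular transversal by a distance-one move, while a flip replaces a base curve $\alpha_j$ by a clean transverse curve $\gamma_j$ with $i(\alpha_j,\gamma_j)\le 2$, so the old and new bases have intersection number at most $2$, whence $d_Y$ between them is uniformly bounded by \eqref{Hempel} and the coarse Lipschitz bound (Lemma $2.3$ of \cite{MasMin2}); the transversals touched by a flip change boundedly because a clean transversal is, by definition, the annular projection of its clean transverse curve. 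This yields a constant $K=K(S)$ with $d_Y(\nu_i,\nu_j)\le K|i-j|+K$ for all $i,j$ and all $Y$ (with the usual care at indices where $\psi_Y$ is undefined). Hence, for each $Y$ with $d_Y(\mu_1,\mu_2)\ge T$, there is a minimal window of indices $W_Y=\{a_Y,\dots,b_Y\}$ with $d_Y(\mu_1,\nu_{a_Y})\le L$ and $d_Y(\nu_{b_Y},\mu_2)\le L$ for a large constant $L=L(S)$, and the Lipschitz estimate forces $d_Y(\mu_1,\mu_2)\prec|W_Y|$; so it is enough to bound $\sum_Y|W_Y|$ by $O(n)$, i.e. to show that boundedly many windows contain any fixed index. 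This is the crux of the lower bound: if $Y$ and $Z$ overlap and $\min\{d_Y(\mu_1,\mu_2),d_Z(\mu_1,\mu_2)\}\ge T$, then Behrstock's inequality applied to $\mu_1$, to $\mu_2$, and to each intermediate $\nu_i$ shows that the indices on which $\psi_Y$ is making progress are disjoint, up to bounded error, from those on which $\psi_Z$ is; the nested case $Y\subsetneq Z$ is handled the same way using the Bounded Geodesic Image Theorem; and pairwise disjoint subsurfaces have disjoint essential boundary multicurves, so at most $O(|\chi(S)|)$ of them occur. Therefore $\sum_Y|W_Y|\prec n$ and $\sum_Y[[d_Y(\mu_1,\mu_2)]]_T\prec\sum_Y|W_Y|\prec n$.

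For the upper bound I would appeal to the hierarchy machinery of \cite{MasMin2}. There is a hierarchy $H$ joining $\mu_1$ to $\mu_2$: a family of tight geodesics $\{g_h\}$, each supported in an essential subsurface $D(h)\subseteq S$, organized by forward and backward subordinacy, with initial and terminal markings compatible with $\mu_1$ and $\mu_2$. The \emph{resolution} of $H$ is a sequence of partial markings running from a cleaning of $\mu_1$ to $\mu_2$, with consecutive terms differing by a bounded number of elementary moves and total length coarsely $|H|:=\sum_{h\in H}|g_h|$; this gives $d_{\mathcal{M}}(\mu_1,\mu_2)\prec|H|$ immediately. The \emph{Large Links Lemma} then identifies $|H|$, up to uniform multiplicative and additive error and for $T$ past a surface-dependent threshold, with $\sum_Y[[d_Y(\mu_1,\mu_2)]]_T$: the endpoints of each $g_h$ are uniformly close to $\psi_{D(h)}(\mu_1)$ and $\psi_{D(h)}(\mu_2)$, so that $|g_h|\asymp^{+}d_{D(h)}(\mu_1,\mu_2)$, and, by a downward induction on complexity, every essential $Y$ with $d_Y(\mu_1,\mu_2)$ large enough occurs as some $D(h)$ (the component geodesics shorter than $T$ must still be accounted for, but their aggregate contribution is dominated by that of the long ones). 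Combining the two inputs gives $d_{\mathcal{M}}(\mu_1,\mu_2)\prec\sum_Y[[d_Y(\mu_1,\mu_2)]]_T$, completing the estimate.

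The principal obstacle is the hierarchy theory that drives the upper bound: the existence of hierarchies, the tightness condition that makes the collection of component domains essentially canonical, the subordinacy and time-ordering bookkeeping, the resolution procedure, and the Large Links Lemma together form the technical heart of \cite{MasMin2}, and they rest in turn on the Bounded Geodesic Image Theorem. By contrast, the ``Lipschitz half'' of the lower bound is immediate from the estimates already displayed in the preliminaries, and the one genuinely new ingredient there---the bounded overlap of windows---is precisely where $T$ is finally pinned down large enough to dominate the Behrstock and bounded-geodesic-image constants.
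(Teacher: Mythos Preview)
The paper does not prove this theorem at all: it is stated in the preliminaries (Section~2.4) as a result of Masur--Minsky \cite{MasMin2} and used as a black box throughout, so there is no ``paper's own proof'' to compare against. Your outline is a faithful high-level sketch of the original Masur--Minsky argument---Lipschitz bound plus Behrstock/bounded-overlap for one direction, hierarchy resolution plus Large Links for the other---but none of that machinery is developed in this paper; the author simply invokes the formula and, in Section~2.5, records exactly those properties of hierarchy paths needed for the later arguments.
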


Given complete clean markings $\mu = \left\{(\alpha_{1}, t_{1}),..., (\alpha_{n}, t_{n})\right\}$ and $\mu'= \left\{(\alpha'_{1}, t'_{1}), ..., (\alpha'_{n}, t'_{n}) \right\}$ we define their intersection number $i(\mu, \mu')$ to simply be the geometric intersection number

\[ i \left( \bigcup_{i} \alpha_{i} \cup \bigcup_{i} \gamma_{i}, \bigcup_{j} \alpha'_{j} \cup \bigcup_{j} \gamma'_{j} \right), \]
where $\gamma_{i}$ (resp. $\gamma'_{j}$) is the clean transverse curve corresponding to $t_{i}$ (resp. $t'_{j}$). 

It will be convenient to work with a modified version of $M(S)$, defined as follows. Given $k \in \mathbb{N}$, a \textit{k-marking} is a graph $\mu$ that fills $S$ (i.e., each complementary region of $\mu$ is simply connected or once-punctured) and such that $\mu$ has at most $k$ edges. Define 
\[j_{1}= \max_{\mu, \mu'}i(\mu, \mu'), \]
where the maximum is taken over all pairs of complete clean markings $\mu, \mu'$ connected by an edge in $M(S)$. Let $B$ be a bound on the number of edges in any complete clean marking $\mu$, interpreted as a graph on $S$. Finally, let 
\[ j_{2}= \max_{\Gamma}\min_{\mu}i(\Gamma, \mu), \]
were the maximum is taken over all graphs $\Gamma$ that fill $S$ with at most $B$ edges, and the minimum is taken over all complete clean markings $\mu$. 

Then for $j= \max(j_{1}, j_{2})$, consider the graph $M_{j,B}(S)$, whose vertices are $B$-markings, and whose edges correspond to pairs $(\Gamma, \Gamma')$ intersecting at most $j$ times. The graph $M_{j,B}(S)$ is connected by choice of $j,B$ and by the connectedness of $M(S)$, and the map $i:M(S)\rightarrow M_{j,B}(S)$ sending a complete clean marking to itself (interpreted as a $B$-marking) is a $\mbox{Mod}(S)$-equivariant quasi-isometry. We define the subsurface projection of a $B$-marking to be equal to the projection of its pre-image under the map $i$; it follows that Theorem \ref{SubSurface} applies as written to $M_{j,B}(S)$. 

\subsection{Hierarchy paths} As part of the proof of Theorem \ref{SubSurface}, given complete, clean markings $\mu, \mu'$ Masur and Minsky construct certain paths in $M(S)$, called \textit{hierarchy paths}, from $\mu$ to $\mu'$ (\cite{MasMin2}). A hierarchy path $H$ from $\mu$ to $\mu'$ can be identified with a collection $C_{H}$ of geodesics defined in curve graphs of various essential subsurfaces of $S$. Given a geodesic $h \in C_{H}$, let $D(h) \subseteq S$ denote the subsurface on which $h$ is defined. There exists a certain relation $\searrow^{d}$ (\textit{direct forward subordinacy}) on the geodesics in $C_{H}$ whose transitive closure generates a partial order on $C_{H}$ (denoted $\searrow$ and called \textit{forward subordinacy}); we will not need the details of this identification, nor the exact definition of forward subordinacy, and therefore we only record some important properties below:

\begin{enumerate}

\item Exactly one of the geodesics in $C_{H}$, called the \textit{main geodesic},  lives in $\mathcal{C}(S)$, and thus every other one is a geodesic in the curve graph of some proper subsurface. Furthermore, the length of the hierarchy path is equal to the sum of the lengths over all geodesics in $C_{H}$.

\item There exists $J= J(S)$ such that if $g$ is a geodesic in $C_{H}$ supported on some subsurface $Y \subseteq S$, then the length of $g$, which we denote by $|g|$, is within $J$ of $d_{Y}(\mu, \mu')$. Moreover, if $d_{Y}(\mu, \mu')>J$, then there exists a geodesic $h$ in the Hierarchy with $D(h)= Y$.

\item If $h \searrow^{d} g$, then $|\chi(D(h))|< |\chi(D(g))|$. Furthermore, given $g \in C_{H}$, the number of geodesics $h$ satisfying $h \searrow^{d} g$ is at most $|g|+4$. 

\item Let $m \in C_{H}$ denote the main geodesic. Then for any $h \in C_{H}, h \neq m$, we have $h \searrow m$.

\end{enumerate}

\subsection{Teichm{\"u}ller space and Rafi's formula} For this section, we assume $S$ has no boundary (but perhaps punctures). The \textit{Teichm\"{u}ller space} of $S$, denoted $\mathcal{T}(S)$, is the space of marked Riemann surfaces homeomorphic to $S$. Concretely,  $\mathcal{T}(S)$, as a set, is the collection of pairs $(\phi, \sigma)$ modulo a certain equivalence relation, where $\sigma$ is a finite area, complete hyperbolic surface homeomorphic to $S$ and $\phi: S \rightarrow \sigma$ is a homeomorphism. The equivalence relation is defined as follows: $(\phi, \sigma) \sim (\phi', \sigma')$ exactly when there exists an isometry $j: \sigma \rightarrow \sigma'$ such that $j \circ \phi$ is homotopic to $\phi'$. Given $x = (\phi, \sigma) \in \mathcal{T}(S)$, $\phi$ is called the \textit{marking}, or \textit{marking homeomorphism} of $x$.

We will assume that $\mathcal{T}(S)$ is equipped with the metric topology coming from the \textit{Teichm\"{u}ller metric}, denoted by $d_{\mbox{Teich}}(\cdot,\cdot)$. In this metric, the distance between two marked Riemann surfaces $x=(\phi_{1}, \sigma_{1})$ and $y=(\phi_{2}, \sigma_{2})$ is determined by the logarithm of the minimal dilatation associated to a quasiconformal map $\Phi: x \rightarrow y$ such that $\Phi \circ \phi_{1}$ is isotopic to $\phi_{2}$. $\mathcal{T}(S)$ is homeomorphic to $\mathbb{R}^{6g-6+2p}$, where $g$ is the genus of $S$ and $p$ is the number of punctures.

Fix $\epsilon>0$. The \textit{$\epsilon$-thick part} of $\mathcal{T}(S)$, denoted $\mathcal{T}_{\epsilon}$ is the set of all points in $\mathcal{T}(S)$ whose underlying hyperbolic metric has injectivity radius at least $\epsilon$; equivalently, it is the set of all marked hyperbolic surfaces on which every essential simple closed curve has length at least $2 \epsilon$. Let $x, y \in \mathcal{T}_{\epsilon}$, and let $\mu_{x}, \mu_{y}$ be the  shortest clean markings on $x,y$ respectively. Then the following formula due to Rafi relates the Teichm{\"u}ller distance $d_{\mathcal{T}}(x,y)$ to subsurface projections (\cite{Rafi}):

\begin{theorem} \label{Rafi} There exists $P>1$ such that 

\[ d_{\mathcal{T}}(x, y) \asymp \sum_{Y \subseteq S} [[d_{Y}(\mu_{x}, \mu_{y})]]_{P}+ \sum_{A \subset S}\log([[d_{A}(\mu_{x}, \mu_{y})]]_{P}), \]

where the first sum is over all non-annular essential subsurfaces $Y$, and the second is over all essential annuli. Moreover, we define $\log([[w]]_{P})$ to be equal to $0$ if $w<P$, and to be $\log(w)$ otherwise. 
\end{theorem}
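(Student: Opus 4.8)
The plan is to reprove Rafi's formula by comparing the two sides term-by-term along a Teichm\"uller geodesic, arguing by induction on the complexity $|\chi(S)|$. The base cases are the once-punctured torus and the four-holed sphere, where $\mathcal{T}(S)$ is bi-Lipschitz to the hyperbolic plane and the estimate reduces to a direct computation relating hyperbolic distance in $\mathbb{H}$ to a single annular term $\log([[d_A]]_P)$. For the inductive step, I would fix the Teichm\"uller geodesic $G$ from $x$ to $y$ together with its associated quadratic differential, and then (i) decompose $G$ via a thick--thin decomposition into \emph{thick segments} (along which $G$ stays in some fixed thick part $\mathcal{T}_{\epsilon_0}$) and \emph{active intervals} $I_\alpha$ (maximal subintervals along which a curve $\alpha$ has hyperbolic length at most $\epsilon_0$), and (ii) show that every term on the right-hand side is accounted for, up to multiplicative error, by exactly one of these pieces.

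Two external inputs drive the argument. The first is Minsky's product regions theorem: for $\epsilon_0$ small, the region $\{\ell_\alpha \le \epsilon_0\} \subseteq \mathcal{T}(S)$ is, up to uniformly bounded additive error in $d_{\mathcal{T}}$, the sup-metric product $\mathbb{H}_\alpha \times \prod_i \mathcal{T}(Y_i)$, where the $Y_i$ are the components of $S \setminus \alpha$ and $\mathbb{H}_\alpha$ carries the hyperbolic metric with imaginary coordinate $\asymp 1/\ell_\alpha$. This is exactly what produces the logarithm: a large relative twisting $d_\alpha(\mu_x,\mu_y)$ forces the projection of $G$ to $\mathbb{H}_\alpha$ to pass through height $\asymp d_\alpha$, contributing length $\asymp \log d_\alpha$. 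The second input is Rafi's analysis of the thick--thin structure of quadratic-differential metrics, which makes the decomposition in (i) precise, guarantees the active intervals are essentially disjoint, and ensures their complement is a union of thick segments; along a thick segment the Teichm\"uller metric is bi-Lipschitz to the marking metric (shortest-marking maps realize a quasi-isometry between a thick part and the marking graph), so by the Masur--Minsky distance formula (Theorem \ref{SubSurface}) the length of a thick segment is coarsely $\sum_{Y}[[d_Y]]_T$ over the subsurfaces whose projection coefficient is accumulated along it.

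To match the two decompositions I would use the combinatorial-geometric dictionary. The shadow of $G$ to $\mathcal{C}(S)$ is an unparametrized quasigeodesic, so by the Bounded Geodesic Image theorem, if $d_Y(\mu_x,\mu_y)$ is large then some curve disjoint from $Y$ appears along this shadow; Rafi's estimates then upgrade this to the statement that all components of $\partial Y$ become short on a common subinterval, so that $Y$ is ``isolated'' there. Conversely, when $\partial Y$ is short on a common interval, the projection of $G$ into the corresponding $\mathcal{T}(Y)$ factor moves, and one shows its displacement is $\asymp d_Y(\mu_x,\mu_y)$ once above threshold --- here one applies the inductive hypothesis inside $\mathcal{T}(Y)$, while for annular $Y$ one reads the displacement directly off $\mathbb{H}_\alpha$, recovering the $\log$. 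Summing the lengths of all thick segments and active intervals and comparing with $\sum_Y [[d_Y]]_P + \sum_A \log([[d_A]]_P)$ then yields the claimed coarse equality, with constants depending only on the topology of $S$ and on $\epsilon_0, P$.

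The hardest part will be the bookkeeping at transitions between thick segments and active intervals, together with the handling of \emph{nested} thinness --- a curve becoming short inside a subsurface $Y$ that is itself pinched --- where one must ensure each subsurface contributes to exactly one piece of the decomposition and that the recursion in the $\mathcal{T}(Y_i)$ factors stays consistent. The other technical core is the precise comparison, for a subsurface $Y$ with short boundary, between the $\mathcal{T}(Y)$-displacement of $G$ and $d_Y(\mu_x,\mu_y)$; this rests on Rafi's detailed study of how flat and hyperbolic lengths of curves interact in the thin part --- the theory of expanding annuli and of the flat versus hyperbolic ``size'' of a subsurface --- which is what lets one pass freely between the quadratic-differential geometry of $G$ and the combinatorial projection coefficients.
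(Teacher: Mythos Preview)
The paper does not prove Theorem~\ref{Rafi}; it is stated in the preliminaries (Section~2.6) as a result due to Rafi and is simply cited from \cite{Rafi} without proof. So there is no ``paper's own proof'' to compare your proposal against.

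That said, your outline is a reasonable sketch of the actual argument in \cite{Rafi}: the induction on complexity, Minsky's product regions theorem to explain the logarithmic contribution from annuli, the thick--thin decomposition of the Teichm\"uller geodesic via Rafi's analysis of quadratic-differential metrics, and the use of the Masur--Minsky machinery to control thick segments. The bookkeeping concerns you flag (nested thinness, transitions between pieces, matching $\mathcal{T}(Y)$-displacement with $d_Y$) are indeed where the technical work lies in the original. For the purposes of this paper, however, none of that is needed: Theorem~\ref{Rafi} is used as a black box, as is the Choi--Rafi estimate~(\ref{LogTeich}), and the paper's own contributions (Theorems~\ref{KSYS} and~\ref{KLink}) are downstream of these cited results.
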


We will also have use for the following coarse equality due to Choi-Rafi, which relates distance in the thick part of Teichm{\"u}ller space to the logarithm of intersection number (\cite{RafChoi}):
\begin{equation} \label{LogTeich}
\log(i(\mu_{x}, \mu_{y})) \asymp^{+} d_{\mathcal{T}}(x,y).
\end{equation}

\section{Bounds on $K$-systems}
In this section, we prove: 

\begin{theorem} \label{KSYS} \textit{Fix a surface $S$ with $\chi(S)<0$, and let $N_{S}(k)$ denote the clique number of $\mathcal{C}_{k}(S)$.  Then } 

\[ \log(N_{S}(k)) \prec k.  \]

\end{theorem}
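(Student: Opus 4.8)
The plan is to realize a $k$-system $\Gamma=\{\alpha_1,\dots,\alpha_N\}$ inside a ball of radius $\asymp k$ in (a model for) $\mathrm{Mod}(S)$, and then invoke the exponential growth of $\mathrm{Mod}(S)$. Concretely, I would fix a basepoint marking, say a complete clean marking $\mu_0$ in the model $M_{j,B}(S)$ quasi-isometric to $\mathrm{Mod}(S)$. For each $\alpha_i$, I want to attach a $B$-marking (or a bounded family of complete clean markings) $\mu_i$ that ``remembers'' $\alpha_i$, in a way that is injective in $i$ and such that $d_{M}(\mu_0,\mu_i)\prec k$. The natural choice: since each $\alpha_i$ is a simple closed curve, it sits in some complete clean marking; better yet, take for $\mu_i$ a $B$-marking whose underlying filling graph contains $\alpha_i$ (together with a bounded amount of extra data to make it fill and to pin down the isotopy class). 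Then the map $\alpha_i\mapsto\mu_i$ is injective because $\alpha_i$ can be recovered from $\mu_i$.

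The heart of the matter is the distance bound $d_{M}(\mu_i,\mu_j)\prec k$ for $i\ne j$; then by the triangle inequality (or by taking $\mu_0=\mu_1$) all the $\mu_i$ lie in a $k$-ball. To get this I would use the Masur--Minsky distance formula (Theorem \ref{SubSurface}), which reduces the problem to bounding $\sum_{Y\subseteq S}[[d_Y(\mu_i,\mu_j)]]_T$. The point is that $\alpha_i$ and $\alpha_j$ intersect at most $k$ times, and intersection number controls subsurface projection distance: by Hempel's inequality (\ref{Hempel}) together with the coarse Lipschitz property of $\psi_Y$, any single term $d_Y(\alpha_i,\alpha_j)$ is $\prec \log(i(\alpha_i,\alpha_j))\prec\log k$. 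So each nonzero term in the sum is at most $\log k$ up to additive error; the only remaining issue is to bound the \emph{number} of subsurfaces $Y$ for which $d_Y(\mu_i,\mu_j)\ge T$. This is where I expect the real work to be, and where the Choi--Rafi relation between intersection number and subsurface projections (and the structure of hierarchy paths, §2.5) comes in: because $i(\alpha_i,\alpha_j)\le k$, the curves can only ``wrap'' nontrivially in boundedly many subsurfaces — more precisely, one should be able to show the number of subsurfaces with $d_Y(\mu_i,\mu_j)$ large, weighted appropriately, is $\prec k/\log k$ or at worst $\prec k$. Combined with the per-term bound of $\log k$, the distance formula then gives $d_M(\mu_i,\mu_j)\prec k$. (One must also handle the cleaning ambiguity and the bounded extra data used to build $\mu_i$ from $\alpha_i$, but each of these only changes projections by a bounded amount and hence only affects constants.)

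With all $\mu_i$ inside a ball of radius $\asymp k$ in $M_{j,B}(S)$, and since $M_{j,B}(S)$ is locally finite with $\mathrm{Mod}(S)$ acting cocompactly, the ball of radius $R$ contains at most $\asymp C^{R}$ vertices for some $C=C(S)>1$; injectivity of $\alpha_i\mapsto\mu_i$ then yields $N=|\Gamma|\le C^{\,O(k)}$, i.e. $\log N_S(k)\prec k$, as desired.

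The step I expect to be the main obstacle is controlling the number of subsurfaces contributing to the distance sum. Bounding each \emph{individual} projection by $\log k$ is routine from surgery/Lipschitz estimates, but turning ``total intersection number $\le k$'' into ``boundedly (or linearly in $k$) many large projections'' requires either a direct combinatorial argument or the Choi--Rafi machinery relating $\log i(\cdot,\cdot)$ to $d_{\mathcal T}$ together with Rafi's formula (Theorem \ref{Rafi}); reconciling the logarithmic weighting of annular terms in Rafi's formula with the linear weighting in Masur--Minsky is the subtle point, and I would be careful to route the argument through whichever of the two formulas makes the bookkeeping cleanest.
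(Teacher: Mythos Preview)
Your overall strategy matches the paper's: embed the $k$-system into a ball of radius $\asymp k$ in a model for $\mathrm{Mod}(S)$ and invoke exponential growth. You have also correctly located the crux, namely the annular terms where the Masur--Minsky and Rafi/Choi--Rafi formulas weight things differently. But there is a genuine gap in how you build the markings $\mu_i$. You say $\mu_i$ should contain $\alpha_i$ ``together with a bounded amount of extra data to make it fill.'' Bounded \emph{what}? If the extra data is some fixed auxiliary filling set, then for subsurfaces $Y$ that $\alpha_i$ misses, $d_Y(\mu_i,\mu_j)$ is governed by the extra data and by $\alpha_j$, over which you have no control; and $i(\mu_i,\mu_j)$ can be arbitrarily large since the fixed data bears no relation to the other curves in $\Gamma$. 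The paper's solution is to build $\Phi(\gamma)$ from $\gamma$ \emph{and sub-arcs of the other curves in $\Gamma$}: iteratively add arcs of elements of $\Gamma$ until the graph fills, which takes at most $|\chi(S)|$ steps. Since every edge of every $\Phi(\gamma)$ is then a sub-arc of some member of the $k$-system, one obtains $i(\Phi(\gamma),\Phi(\gamma'))\prec k$ directly. The map is not literally injective but bounded-to-one: $\gamma$ is an embedded cycle in the $B$-marking $\Phi(\gamma)$, and a graph with $\le B$ edges has only boundedly many embedded cycles.

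With this construction, the paper factors the remaining work through a clean lemma: for any two markings, $d_M(\mu_1,\mu_2)\prec i(\mu_1,\mu_2)$. This is exactly your ``main obstacle,'' and the paper handles it as follows. Write the Masur--Minsky sum as $R_1+R_2$ (non-annular plus annular). Choi--Rafi gives $R_1\prec\log i$ immediately. For $R_2$: first, the hierarchy structure shows there are at most $\prec\log i$ annuli with projection above threshold at all; second, by exponentiating the Choi--Rafi inequality into a product and arguing by contradiction, at most $L=L(S)$ of those annuli can have projection exceeding $i/\log i$. The $L$ exceptional annuli contribute $\le L\cdot i$, and the remaining $\prec\log i$ annuli contribute $\le (\log i)\cdot i/\log i = i$, so $R_2\prec i$. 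Note that your interim claim ``each term $d_Y(\alpha_i,\alpha_j)\prec\log k$'' is false for annular $Y$ (annular projection can be as large as $k$), which is precisely why this separate treatment of $R_2$ is needed; you flag this at the end, but the argument above is how it actually gets resolved.
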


\begin{proof} Fix $k>0$ and let $\Gamma= \left\{\gamma_{1},...,\gamma_{n}\right\}$ be a $k$-system on $S$. The strategy will be to produce a constant $W$ (depending only on $S$ and not on $k$), and a map $\Phi:\Gamma \rightarrow \mbox{Mod}(S)$ such that $(1)$ the pre-image of any point has cardinality at most $W$ and $(2)$ such that the image $\Phi(\Gamma)$ is contained in a ball of radius coarsely at most $k$ in $M(S)$. Since $M(S)$ has exponential growth, Theorem \ref{KSYS} follows.

To this end, we first show that distance in $M(S)$ is coarsely bounded above by intersection number:

\begin{lemma} \label{IntBound} . Given $\mu_{1}, \mu_{2} \in M(S)$, 

\[ d_{M}(\mu_{1}, \mu_{2}) \prec i(\mu_{1}, \mu_{2}). \]

\end{lemma} 

\begin{remark} We note that Lemma \ref{IntBound} is sharp: if $\mu_{1}$ is obtained from $\mu_{2}$ by applying $k$ twist moves about one base curve, then $d_{M}(\mu_{1}, \mu_{2})$ and $i(\mu_{1}, \mu_{2})$ will both be coarsely equal to $k$.  
\end{remark}

\begin{proof} To prove Lemma \ref{IntBound}, we use the following inequality, which follows from Rafi's distance formula and from (\ref{LogTeich}): there exists some constant $P=P(S)$ such that

\begin{equation} \label{RafiChoi}
\log(i(\mu_{1}, \mu_{2})) \succ \sum_{Y \subseteq S} [[d_{Y}(\mu_{1}, \mu_{2})]]_{P} +  \sum_{A \subset S} \log([[d_{A}(\mu_{1}, \mu_{2})]]_{P}), 
\end{equation}
where the first sum is over all non-annular essential subsurfaces of $Y$, and the second is taken over all essential annuli. We also recall the Masur-Minsky distance formula (\cite{MasMin2}), which asserts the existence of a constant $T=T(S)$ such that 

\begin{equation} \label{MasurMinsky}
 d_{M}(\mu_{1}, \mu_{2}) \asymp \sum_{Y \subseteq S}[[d_{Y}(\mu_{1}, \mu_{2})]]_{T}, 
\end{equation}
where the sum is taken over all essential subsurfaces of $S$, including annuli. Define $R_{1}, R_{2}$ by 

\begin{equation} \label{TwoSums}
 R_{1}:= \sum_{Y \subseteq S, \mbox{non-annular}} [[d_{Y}(\mu_{1}, \mu_{2})]]_{T}; R_{2}:= \sum_{A \subset S, A \hspace{1 mm} \mbox{annular}} [[d_{A}(\mu_{1}, \mu_{2})]]_{T}. 
\end{equation}
 Then the sum on the right-hand side of the Masur-Minsky distance formula (\ref{MasurMinsky}) is simply $R_{1}+R_{2}$. Define $R'_{2}$ to be the sum of the logarithm of all sufficiently large annular projections (larger than $P$); that is, $R'_{2}$ is simply the second sum on the right hand side of the Choi-Rafi formula (\ref{RafiChoi}).  

We first note that it suffices to assume that the threshold $P$ in (\ref{RafiChoi}) is equal to the threshold $T$ in (\ref{MasurMinsky}). Indeed, assume first that $P<T$. However we are free to raise the threshold $P$ until it equals $T$, as this will only make the right hand side of (\ref{RafiChoi}) smaller and therefore the inequality will remain true. Thus it suffices to assume that $T\leq P$. 

On the other hand, if $T<P$, we can also raise $T$ so that it coincides with $P$. This follows from the fact that the Masur-Minsky distance formula (\ref{MasurMinsky}) holds for all sufficiently large thresholds (however different thresholds require different coarse equality constants). Thus henceforth, we can assume that $P=T$. Hence in particular, (\ref{RafiChoi}) implies 

\begin{equation} \label{R1Bound}
\log(i(\mu_{1}, \mu_{2})) \succ R_{1}. 
\end{equation}

Thus, applying (\ref{MasurMinsky}), we have reduced Lemma \ref{IntBound} to proving 

\begin{equation} \label{R2Bound}
i(\mu_{1}, \mu_{2}) \succ R_{2}
\end{equation}

To this end, we first note that $R_{2}$ has coarsely at most $\log(i(\mu_{1}, \mu_{2}))$ summands. Indeed, by the third property of hierarchies listed in subsection $2.5$, any annulus $A$ for which $d_{A}(\mu_{1}, \mu_{2})$ is sufficiently large will appear in a hierarchy path from $\mu_{1}$ to $\mu_{2}$, and thus by properties $(3)$ and $(4)$ in subsection $2.5$ there are coarsely at most 
\[ R_{1}:= \sum_{Y \subseteq S} [[d_{Y}(\mu^{(j)}_{1}, \mu^{(j)}_{2})]]_{M} \]
such summands. Hence the desired bound follows from (\ref{R1Bound}).

We next claim that there exists a constant $L$ depending only on $S$, such that there exists at most $L$ essential annuli $A \subset S$ satisfying 

\begin{equation} \label{AnnBound}
d_{A}(\mu_{1}, \mu_{2})> i(\mu_{1}, \mu_{2})/\log(i(\mu_{1}, \mu_{2})).
\end{equation}

Assuming (\ref{AnnBound}), we have the bound 

\[ R_{2} \prec L \cdot i(\mu_{1}, \mu_{2}) + \log(i(\mu_{1}, \mu_{2})) \left[ \frac{i(\mu_{1}, \mu_{2})}{\log(i(\mu_{1}, \mu_{2}))} \right] \]
\[ = (L+1) i(\mu_{1}, \mu_{2}), \]
which implies the conclusion of Lemma \ref{IntBound}. Therefore it remains to prove the existence of such an $L$; assume by way of contradiction that no $L$ exists. Then there exists a sequence of pairs of markings 
\[(\mu^{(j)}_{1}, \mu^{(j)}_{2})_{i=1}^{\infty}\]
satisfying the property that if $\mathcal{S}^{(j)}$ denotes the number of annuli on to which the projection of the pair $\mu^{(j)}_{1}, \mu^{(j)}_{2}$ has distance at least 
\[i(\mu^{(j)}_{1}, \mu^{(j)}_{2})/\log(i(\mu^{(j)}_{1}, \mu^{(j)}_{2})), \]
then $\mathcal{S}^{(j)} \rightarrow \infty$. 

Henceforth, let $i^{(j)}$ denote the intersection $i(\mu^{(j)}_{1}, \mu^{(j)}_{2})$, and note that (\ref{RafiChoi}) implies that for each $j$, 

\begin{equation} \label{R'2Bound}
\log(i^{(j)}) \succ \sum_{A \subset S, \mbox{annular}} \log[[d_{A}(\mu^{(j)}_{1}, \mu^{(j)}_{2})]]_{T}=: R^{(j)}_{2},
\end{equation}
where the multiplicative and additive constants in the coarse equality depend only on $S$ and not on $j$. Exponentiating both sides of this inequality, we obtain 

\begin{equation} \label{ProductBound}
 i^{(j)} > 2^{-Q}\prod_{A \subset S}([[d_{A}(\mu^{(j)}_{1}, \mu^{(j)}_{2})]]_{T})^{1/Q}, 
\end{equation}
for some $Q>0$ depending only on $S$. However, since there are $\mathcal{S}^{(j)}$ summands of $R^{(j)}_{2}$ whose size is at least $i^{(j)}/\log(i^{(j)})$ (and since each summand is at least $1$), it follows that the product on the right hand side of (\ref{ProductBound}) is at least on the order of 

\[  2^{-Q}\left[i^{(j)}/\log(i^{(j)})\right]^{\mathcal{S}^{(j)}/Q},\]
which grows super-polynomially in $i^{(j)}$ because $\mathcal{S}^{(j)} \rightarrow \infty$, and this contradicts (\ref{ProductBound}). Hence $\mathcal{S}^{(j)}$ must be uniformly bounded. This completes the proof of Lemma \ref{IntBound}.

\end{proof}

Lemma \ref{IntBound} implies that Theorem \ref{KSYS} follows so long as we can construct the aforementioned map $\Phi:\Gamma \rightarrow \mbox{Mod}(S)$, so that $(1)$ the cardinality of each pre-image of $\Phi$ is bounded solely in terms of the topology of $S$, and $(2)$ there exists $\gamma\in \Gamma$ such that for any $\gamma' \in \Gamma$,

\[ i(\Phi(\gamma), \Phi(\gamma')) \prec k. \]

We will use the graph $M_{j,B}(S)$ as a model for $\mbox{Mod}(S)$; therefore, our goal is to associate a $B$-marking to each element of $\Gamma$. We first note that it suffices to assume that $\Gamma$ fills $S$. For if not, we can decompose $S$ into a disjoint union of subsurfaces $S_{1},S_{2},...$ such that each $\gamma$ is contained in one $S_{i}$ and such that each $S_{i}$ is filled by the subset $\Gamma_{i}$ of $\Gamma$ it contains. The number of such subsurfaces is bounded above solely in terms of the topology of $S$, and therefore the desired bound on $|\Gamma|$ follows by a bound on each $\Gamma_{i}$. Therefore, henceforth we assume that $\Gamma$ has one connected component. 

Given $\gamma \in \Gamma$, we will build a $B$-marking by starting with $\gamma$ and adding additional edges that are sub-arcs of other elements in $\Gamma$. To start, choose some $\gamma' \in \Gamma$ such that $i(\gamma, \gamma')\neq 0$. Then there exists a sub-arc $e$ of $\gamma'$ with endpoints on $\gamma$; then extend $\gamma$ to the graph $\gamma \cup e$. As we are assuming that elements of $\Gamma$ are in pairwise minimal position, no complementary component of $\gamma \cup e$ is a bigon. Now, we simply iterate; extend $\gamma \cup e$ to a larger graph by adding an edge $e'$ associated to a sub-arc of another element of $\Gamma$ intersecting $\gamma \cup e$. At each stage, $e'$ is chosen so that the absolute value of the Euler characteristic of the subsurface filled by the extended graph grows monotonically. 

Thus, after at most $|\chi(S)|$ iterations, we obtain a graph $\Phi(\gamma)$, built from $\gamma$ and from arcs of elements in $\Gamma$, that fills $S$. As there are complete clean markings with more than $|\chi(S)|$ edges, $\Phi(\gamma)$ is a $B$-marking. Furthermore, given $\gamma, \gamma' \in \Gamma$ it follows that 
\[ i(\Phi(\gamma), \Phi(\gamma')) \prec k, \]
since each edge of both graphs is a sub-arc of some element of $\Gamma$, and therefore any edge of $\Phi(\gamma)$ can intersect an edge of $\Phi(\gamma')$ at most $k$ times; thus the bound follows from the fact that both graphs are $B$-markings and have at most $B$ edges each by definition. Finally, we note that for each $\gamma \in \Gamma$, $\gamma$ is an embedded cycle in the graph $\Phi(\gamma)$; that is, $\Phi(\gamma)$ contains a graph-path homotopic to $\gamma$ which does not traverse any edge more than once. The number of such cycles is bounded above solely in terms of the number of edges of $\Phi(\gamma)$, and therefore the cardinality of any pre-image $\Phi^{-1}(\gamma)$ is bounded above solely in terms of $B$. This completes the proof of Theorem \ref{KSYS}.

\end{proof}

\section{Intersections of links}

In this section, we prove uniform bounds on the size of the intersection of $k$-links for a pair of filling curves $\alpha, \beta$:

\begin{theorem} \label{KLink}  There exists a function $r_{S}(k)$ depending only on the topology of $S$, which grows at most quasi-polynomially and which satisfies the following. Let $\alpha, \beta$ be simple closed curves on $S$ which fill $S$, and let $\mathcal{L}_{k}(\alpha)$ denote the set of all vertices in $\mathcal{C}_{k}(S)$ that are distance $1$ from $\alpha$. Then $|\mathcal{L}_{k}(\alpha) \cap \mathcal{L}_{k}(\beta)|\leq r_{S}(k)$. 
\end{theorem}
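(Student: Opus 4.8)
The plan is to find a single thick hyperbolic structure $\sigma \in \mathcal{T}_\epsilon(S)$ on which both $\alpha$ and $\beta$ have uniformly short representatives, and then to count short curves on $\sigma$ using Rivin's estimate. First I would record the key input: since $\alpha$ and $\beta$ fill $S$, every subsurface projection $d_Y(\alpha,\beta)$ is bounded above by a universal constant $M_0 = M_0(S)$ (this is a standard consequence of filling — a filling pair hits every essential subsurface, and the projections are coarsely determined by the topology; alternatively one can cite the Behrstock-type inequality or the elementary observation that $i(\alpha,\beta)$ bounds all projections but here we want the \emph{reverse}, that filling forces all $d_Y$ bounded, which follows because the complement of $\alpha \cup \beta$ is a union of disks and once-punctured disks and annuli, so no subsurface can ``see'' unbounded winding between them). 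I would then invoke Rafi's formula (Theorem \ref{Rafi}) together with \eqref{LogTeich}: take $x$ to be (coarsely) the point of $\mathcal{T}(S)$ whose shortest marking contains $\alpha$ in its base, and observe that the Teichm\"uller distance from $x$ to any point $y$ whose shortest marking contains $\beta$ is coarsely bounded, because every summand in Rafi's formula is bounded by $M_0$ and there are only finitely many subsurfaces with $d_Y > P$ — hence $d_{\mathcal{T}}(x,y)$ is uniformly bounded, independent of $\alpha,\beta$.

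Next I would promote this to: there is a point $\sigma \in \mathcal{T}_\epsilon(S)$, with $\epsilon = \epsilon(S) > 0$ uniform, on which $\ell_\sigma(\alpha) \le C_0$ and $\ell_\sigma(\beta) \le C_0$ for a uniform $C_0 = C_0(S)$. One way: start at a fixed basepoint in the thick part, move along a Teichm\"uller geodesic toward a structure making $\alpha$ short; the bound on projections keeps everything in a uniformly-sized region of the thick part, so one lands at a thick $\sigma$ where both $\alpha$ and $\beta$ are short. (This is exactly the Choi–Rafi thick-part distance estimate referenced in the introduction: $d_{\mathcal{T}_\epsilon}(x,y) \asymp$ the sum over subsurfaces, so bounded projections give a bounded thick-part distance, and one can realize the endpoint in $\mathcal{T}_\epsilon$.) Having fixed such a $\sigma$, the crucial geometric consequence is: for \emph{any} essential simple closed curve $\gamma$ with $i(\gamma,\alpha) \le k$ and $i(\gamma,\beta)\le k$, the $\sigma$-length of the geodesic representative of $\gamma$ satisfies $\ell_\sigma(\gamma) \le f(k)$ for an explicit slowly-growing $f$. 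Here I would argue that a curve crossing $\alpha$ at most $k$ times and $\beta$ at most $k$ times, on a surface where $\alpha \cup \beta$ fills and both have bounded length, must have bounded-times-$k$ length — because $S \setminus (\alpha \cup \beta)$ has uniformly bounded diameter pieces, so $\gamma$ decomposes into at most $2k$ arcs each of uniformly bounded length; thus $\ell_\sigma(\gamma) \le C_1 \cdot k$ for uniform $C_1$.

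Finally I would apply Rivin's counting estimate (\cite{Riv}): on a thick hyperbolic surface $\sigma \in \mathcal{T}_\epsilon(S)$, the number of homotopy classes of simple closed geodesics of length at most $L$ is bounded above by a polynomial in $L$ of degree $\dim \mathcal{T}(S) = 6g-6+2p$, with the implied constant depending only on $\epsilon$ and the topology — hence uniformly over all $\sigma \in \mathcal{T}_\epsilon(S)$. Combining, $|\mathcal{L}_k(\alpha) \cap \mathcal{L}_k(\beta)| \le N_\sigma(C_1 k) \le \mathrm{poly}(C_1 k)$, which is in particular quasi-polynomial in $k$ (indeed polynomial, so $c=1$). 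For the multicurve version mentioned in the remark the same argument runs verbatim since filling and the projection bounds only depend on $\alpha \cup \beta$ as a subset of $S$.

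The main obstacle I expect is the first step: proving cleanly that a filling pair has all subsurface projections uniformly bounded, and — more delicately — producing the uniformly thick surface $\sigma$ rather than merely a surface of bounded Teichm\"uller distance from the thick part. A priori, making $\alpha$ very short drives one deep into a thin region, and one must use that $\beta$ (which fills together with $\alpha$) simultaneously constrains the geometry to prevent \emph{any} curve from getting too short; quantifying this — showing the optimal $\sigma$ balancing $\ell_\sigma(\alpha)$ and $\ell_\sigma(\beta)$ is genuinely $\epsilon(S)$-thick — is where the Choi–Rafi thick-part formula does the real work and where the argument needs to be set up carefully.
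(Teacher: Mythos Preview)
Your proposal contains a genuine gap at the very first step: the claim that a filling pair $\alpha,\beta$ has all subsurface projections $d_Y(\alpha,\beta)$ bounded by a universal constant $M_0(S)$ is false. Filling only guarantees that both $\alpha$ and $\beta$ \emph{project nontrivially} to every essential subsurface; it does not bound the distance between those projections. The simplest way to see this is that $Y=S$ itself is one of the subsurfaces in question, and $d_S(\alpha,\beta)$ is just curve-graph distance, which is unbounded among filling pairs (any pair at curve-graph distance $\geq 3$ fills, and such pairs exist at arbitrarily large distance along a pseudo-Anosov axis). Equivalently, if all $d_Y(\alpha,\beta)$ were uniformly bounded then by the Choi--Rafi estimate $\log i(\alpha,\beta)$ would be uniformly bounded, contradicting the existence of filling pairs with arbitrarily large intersection number. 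The heuristic ``complementary disks prevent unbounded winding'' is also wrong: twisting $\beta$ many times about a curve $c$ keeps the complement a union of disks but drives the annular projection $d_c(\alpha,\beta)$ to infinity.

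This is exactly the difficulty the paper's proof is built around. Rather than assuming bounded projections, the paper first shows that any subsurface $Y$ with $d_Y(\alpha,\beta)$ larger than roughly $\log k$ (or $k$, for annuli) cannot be entered by any curve in $\mathcal{L}_k(\alpha)\cap\mathcal{L}_k(\beta)$; such subsurfaces are then iteratively cut away, and one passes to the complement $S'$ with $\alpha,\beta$ replaced by their projections. Only \emph{after} this reduction are all projections bounded --- and bounded in terms of $k$, not uniformly --- which via Choi--Rafi yields $i(\alpha,\beta)\leq w(k)$ with $w$ quasi-polynomial. The thick surface $\sigma$ is then chosen so that $\beta$ is uniformly short and $\alpha$ has length $\prec w(k)$ (not uniformly bounded), and the final count gives $N_\sigma(k\cdot w(k))$, which is polynomial in $k\cdot w(k)$ and hence quasi-polynomial in $k$. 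Your Steps 2--4 are essentially correct once the reduction is in place, but without it the argument does not start; and the dependence on $k$ in the projection bounds is precisely why the theorem states a quasi-polynomial rather than the polynomial bound your argument would yield.
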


\begin{proof} The strategy of the proof is as follows: first we reduce to the case where the intersection number $i(\alpha, \beta)$ is bounded above by a quasi-polynomial function of $k$. To do this, we use the technology of hierarchies to argue that if $i(\alpha, \beta)$ is very large, there must exist some subsurface $S'$ of $S$ on which a definite number of intersections between $\alpha$ and $\beta$ accumulate. It will then follow that no curve in $\mathcal{L}_{k}(\alpha) \cap \mathcal{L}_{k}(\beta)$ can intersect $S'$, and thus we may restrict attention to its complement. Then using Choi-Rafi's estimate for distance in the thick part of Teichmuller space in terms of intersection number (\cite{RafChoi}), we find a hyperbolic surface $\sigma(\alpha, \beta)$ on which both $\alpha, \beta$ have bounded length (quasi-polynomial in $k$) representatives. It follows that the number of curves in $\mathcal{L}_{k}(\alpha) \cap \mathcal{L}_{k}(\beta)$ is comparable to the number of geodesics on $\sigma$ with length bounded above by some explicit quasi-polynomial function of $k$. Finally we appeal to a result of Rivin (\cite{Riv}) estimating the number of such bounded length curves on $\sigma$.

\subsection{Step 1: bound $i(\alpha,\beta)$}
Suppose there exists a non-annular essential subsurface $Y \subseteq S$ such that 
\[ d_{Y}(\alpha, \beta) > 4\log(k) + 10.\]
Then if $\gamma \in \mathcal{L}_{k}(\alpha) \cap \mathcal{L}_{k}(\beta)$, $\gamma$ must be disjoint from $Y$, or homotopic into the boundary of $Y$. Indeed, if $\gamma$ projected to $Y$, then $\psi_{Y}(\gamma)$ would intersect both $\psi_{Y}(\alpha), \psi_{Y}(\beta)$ at most $2k+4$ times, and thus by inequality (\ref{Hempel}), 
\[ d_{Y}(\gamma, \alpha), d_{Y}(\gamma, \beta) \leq 2\log(2k+4)+2, \]
and we obtain a contradiction by applying the triangle inequality in $\mathcal{C}(Y)$. Similarly, if there exists an annulus $A$ with $d_{A}(\alpha, \beta)> 2k+4$, no element of the intersection $\mathcal{L}_{k}(\alpha) \cap \mathcal{L}_{k}(\beta)$ can cross $A$. 

If such a non-annular subsurface $Y$, or an annulus $A$ exists, then consider its complement $S \setminus Y$, or $S \setminus A$. In either case, denote this complement by $S'$. Then any curve in $\mathcal{L}_{k}(\alpha) \cap \mathcal{L}_{k}(\beta)$ must be homotopic into $S'$. Furthermore, by construction of the projection map $\psi_{S'}$ and by inequality (\ref{ProjInt}), it follows that $\gamma \in \mathcal{L}_{k}(\alpha) \cap \mathcal{L}_{k}(\beta)$ only if it is in the intersection 
\[ \mathcal{L}_{2k+4}(\psi_{S'}(\alpha)) \cap \mathcal{L}_{2k+4}(\psi_{S'}(\beta)),\]
where these $(2k+4)$-links are in the graph $\mathcal{C}_{2k+4}(S')$. 

Thus, we claim that it suffices to assume that there does not exist a non-annular subsurface $Y$ such that
\[ d_{Y}(\alpha, \beta) > 4 \log(k) + 10, \]
or an annular subsurface $A$ such that 
\[ d_{A}(\alpha, \beta) > 2k+4. \]
Indeed, we can apply the above argument iteratively to remove any such subsurfaces. Each time we apply this argument, we concentrate on the intersection of links $\mathcal{L}_{k'}(\psi_{S'}(\alpha)) \cap \mathcal{L}_{k'}(\psi_{S'}(\beta))$ in a smaller subsurface, where, due to inequality (\ref{ProjInt}), $k'$ is coarsely at most $k$. Since the absolute value of the Euler characteristic must decrease at each stage, the argument eventually terminates, and we obtain a (possibly disconnected, possibly empty) subsurface $S'$ such that 
\begin{enumerate}
\item $d_{Y}(\psi_{S'}(\alpha), \psi_{S'}(\beta))\leq 4\log(k)+10$ for any essential non-annular subsurface $Y \subseteq S'$, and $d_{A}(\psi_{S'}(\alpha), \psi_{S'}(\beta))\leq 2k+4$ for any essential annular subsurface of $S'$;
\item  all elements of the intersection $\mathcal{L}_{k}(\alpha) \cap \mathcal{L}_{k}(\beta)$ must reside in $S'$;
\item  each of these curves must intersect the projections $\psi_{S'}(\alpha), \psi_{S'}(\beta)$ coarsely at most $k$ times; and 
\item $i(\psi_{S'}(\alpha), \psi_{S'}(\beta)) \prec i(\alpha, \beta).$
\end{enumerate}

Henceforth, we replace $S$ with $S'$, and $\alpha$ (resp. $\beta$) with its projection $\psi_{S'}(\alpha)$ (resp. $\psi_{S'}(\beta)$). Then using assumption $(1)$ above, we will bound the intersection number $i(\alpha, \beta)$ and this bound will later be used to bound $\mathcal{L}_{k}(\alpha) \cap \mathcal{L}_{k}(\beta)$, which by assumptions $(2), (3)$ and $(4)$, will imply the desired bound for the original surface $S$ and filling pair $\alpha, \beta$. 

Next, we will use the Choi-Rafi formula to bound $i(\alpha, \beta)$.  First, we augment $\alpha, \beta$ to markings $\mu_{\alpha}, \mu_{\beta}$ such that 
\[ i(\mu_{\alpha}, \mu_{\beta}) \asymp i(\alpha, \beta). \]
The marking $\mu_{\alpha}$ is obtained by projecting $\beta$ to the complement $S \setminus \alpha$. Concretely, $\mu_{\alpha}$ is the $B$-marking obtained by taking the union of $\alpha$ with a maximal collection of pairwise non-homotopic arcs of $\beta \cap (S \setminus \alpha)$, and similarly for $\mu_{\beta}$. Then the Choi-Rafi formula relating intersection number to subsurface projections states that
\begin{equation} \label{RafiChoi2}
 \log(i(\mu_{\alpha}, \mu_{\beta})) \prec \sum_{Y \subseteq S} [[d_{Y}(\mu_{\alpha}, \mu_{\beta})]]_{P} +  \sum_{A \subset S} \log([[d_{A}(\mu_{\alpha}, \mu_{\beta})]]_{P}). 
\end{equation}

Then the assumption that there are no large projections, together with the properties of hierarchy paths recorded in section $2$, will allow us to bound the right hand side from above in terms of $k$. Indeed, by property $(2)$ of hierarchy paths, every subsurface $Y$ with a large projection must appear in a hierarchy path from $\mu_{\alpha}$ to $\mu_{\beta}$, and furthermore, the length of the geodesic on that subsurface is coarsely the projection $d_{Y}(\mu_{\alpha}, \mu_{\beta})$. Thus by properties $(3)$ and $(4)$ of hierarchy paths, the right hand side of (\ref{RafiChoi2}) is coarsely at most 
\[ [\log(k)]^{f(S)}, \]
where 
\[ f(S) \asymp |\chi(S)|. \] 
To see this, define a \textit{weighted tree} $T$ to be a tree whose vertices are labeled by natural numbers. The \textit{size} of a weighted tree is the sum, over each vertex, of the weight of that vertex.   Then we claim that the right hand side of (\ref{RafiChoi2}) can be bounded above by the number of vertices in the following weighted tree $T$: $T$ has diameter $f(S)$ and base vertex $v$; each vertex of $T$ that is not adjacent to a univalent vertex has valence $\log(k)$ and weight $\log(k)$, and each univalent vertex of $T$ has weight $\log(k)$ as well. The base vertex $v$ represents the entire surface $S$; since the distance in the full curve graph $\mathcal{C}(S)$ between $\mu_{\alpha}$ and $\mu_{\beta}$ is coarsely at most $\log(k)$, we assign $v$ a weight of $\log(k)$. Since there are coarsely at most $\log(k)$ subsurfaces whose Euler characteristic has absolute value one less than the full surface $S$, $v$ has valence $\log(k)$. 

We iterate this argument for every proper subsurface, until we arrive at the annuli. Each annulus sees a projection of at most $k$, and therefore each contributes at most $\log(k)$ to the right hand side of (\ref{RafiChoi2}). These vertices are univalent as annuli do not contain any further proper subsurfaces. Thus the size of the weighted tree, and also the right hand side of (\ref{RafiChoi2}) is coarsely at most $[\log(k)]^{f(S)}$, as desired.

Then by exponentiating both sides of (\ref{RafiChoi2}), we obtain a quasi-polynomial upper bound on $i(\mu_{\alpha}, \mu_{\beta})$ and hence also on $i(\alpha, \beta)$. Let $w(k)$ denote this bound. 

\subsection{Step 2: Find the hyperbolic surface $\sigma(\alpha, \beta)$}

Let $S(\alpha)$ (resp. $S(\beta)$)  denote a hyperbolic surface in the thick part of Teichm{\"u}ller space minimizing the length of $\mu_{\alpha}$ (resp. $\mu_{\beta}$). Though $S$ may have both boundary components and punctures, we treat each boundary component as a puncture when hyperbolizing. Thus $\sigma(\alpha), \sigma(\beta)$ are both finite area complete hyperbolic surfaces of the same topological type, potentially disconnected and with parabolic cusps but without boundary. In the case that $S'$ is disconnected (and therefore $\sigma(\alpha), \sigma(\beta)$ are also disconnected), we define $\mathcal{T}(S')$ to be the product of the Teichm{\"u}ller spaces of its connected components, equipped with the sup metric. Then the Choi-Rafi estimate  (\ref{LogTeich}), together with the bound obtained in the previous subsection, implies 

\begin{equation} \label{RafiChoiAlBe}
\log(w(k)) \succ^{+} d_{\mathcal{T}}(\mu_{\alpha}, \mu_{\beta}).
\end{equation}

In particular, $\alpha$ admits a representative on $S(\beta)$ whose length is coarsely at most $w(k)$. This follows, for instance, by the fact that Thurston's Lipschitz metric on Teichm{\"u}ller space is bounded above by the Teichm{\"u}ller metric. 

Since the length of $\beta$ is bounded on $S(\beta)$ in terms of only the topology of the surface $S$, it follows that $\alpha \cup \beta$ has length bounded coarsely from above by $w(k)$ on $S(\beta)$. We set $\sigma(\alpha, \beta):= S(\beta)$.

\subsection{Step 3: bounding the number of short curves on $\sigma(\alpha, \beta)$.}
Thus the geodesic representatives for $\alpha$ and $\beta$ (which by abuse of notation we also refer to as $\alpha$ and $\beta$) decompose $\sigma(\alpha, \beta)$ into hyperbolic polygons $P_{1},...,P_{N}$ and possibly also a finite number of once-punctured regions with piecewise geodesic boundaries, such that:

\begin{enumerate}
\item for each $i =1,...,N$, each side of $P_{i}$ has hyperbolic length (coarsely) at most $w(k)$;
\item for each $i= 1,...,N$, $P_{i}$ has a uniformly bounded number of sides (in terms only of the topology of $S$). 
\end{enumerate}

Property $(1)$ follows from the previous subsection. Property $(2)$ follows from (\ref{EulChar}) below, and in particular it is true for any filling pair on $S$, regardless of intersection number. Indeed, let $N$ denote the number of simply-connected components of $S \setminus (\alpha \cup \beta)$. A basic Euler characteristic argument (together with the observation that $\alpha \cup \beta$ constitutes a $4$-valent graph) yields the equality 
\begin{equation} \label{EulChar}
\chi(S)= N- i(\alpha, \beta). 
\end{equation}
Since each of these regions has at least $4$ sides, it follows that no region can have more than $4|\chi(S)|+5$ sides. Note that the same bound applies for the number of sides of any of the once-punctured complementary regions. Indeed, if there exists a once-punctured region $R$ with at least $4$ sides, then $\alpha, \beta$ will be in minimal position on the surface $\tilde{S}$ obtained by filling in that puncture as this does not create any bigons. Thus $R$ can not have more than $4|\chi(S)|+5$ sides, by applying the same argument on $\tilde{S}$. 

Therefore, each $P_{i}$ has diameter coarsely bounded above by $w(k)$, and this implies that the length of an arc of any geodesic contained within one of the $P_{i}$'s has length coarsely at most $w(k)$. Moreover, although the diameter of a once-punctured region is infinite, the same argument implies that the length of an arc of a geodesic contained within one of the once-punctured complementary regions is also coarsely at most $w(k)$. Hence if $\gamma \in \mathcal{L}_{k}(\alpha) \cap \mathcal{L}_{k}(\beta)$, the geodesic representative for $\gamma$ on $\sigma(\alpha,\beta)$ has hyperbolic length at most $Z\cdot(k \cdot w(k))+ Z$, for some constant $Z>0$. That is, if $N_{\sigma}(Z\cdot(k \cdot w(k))+ Z)$ denotes the collection of simple closed geodesics on $\sigma(\alpha, \beta)$ of length at most $Z\cdot(k \cdot w(k))+ Z$, we have 

\[ \mathcal{L}_{k}(\alpha) \cap \mathcal{L}_{k}(\beta) \subseteq N_{\sigma}(Z\cdot(k \cdot w(k))+ Z). \]
Rivin (\cite{Riv}) has shown that there exists some constant $V= V(\sigma)$ such that for any $L$,
\[ N_{\sigma}(L) \leq V(L)^{|\dim(\mathcal{T}(S))|}+V. \]
The constant $V$ necessarily diverges as injectivity radius decays to $0$; however we have chosen $\sigma(\alpha, \beta)$ to be uniformly thick, independent of the choice of filling pair $\alpha, \beta$. This completes the proof of Theorem \ref{KLink}.

\end{proof}

As shown in the introduction, as a corollary we obtain the following bound on the diameter of a large $k$-system in the curve graph:

\begin{corollary} \label{CGDiam} Let $\Omega$ be a $k$-system on $S$ with $|\Omega|= N_{k}(S)$. Then for all sufficiently large $k$, $\Omega$ projects to a subset of the curve graph of diameter $2$.
\end{corollary}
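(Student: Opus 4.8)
The plan is to deduce Corollary \ref{CGDiam} from Theorem \ref{KLink} exactly as sketched in the introduction, so the proof is genuinely short and structural. Suppose $\Omega$ is a $k$-system realizing the clique number $N_k(S)$, and suppose for contradiction that $\Omega$ does \emph{not} project to a subset of the curve graph of diameter at most $2$. Then $\Omega$ contains two curves $\alpha, \beta$ with $d_S(\alpha,\beta) \geq 3$. The first step is to recall the standard fact that if two simple closed curves are at distance at least $3$ in $\mathcal{C}(S)$, then they fill $S$: indeed, if $\alpha \cup \beta$ failed to fill, there would be an essential simple closed curve $\delta$ disjoint from both, giving a path $\alpha, \delta, \beta$ of length $2$. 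So $\alpha$ and $\beta$ fill $S$, and Theorem \ref{KLink} applies to the pair.

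The second step is to observe that the entire clique $\Omega$ lies in $\mathcal{L}_k(\alpha) \cap \mathcal{L}_k(\beta)$. This is immediate from the definition of a $k$-system: every $\gamma \in \Omega$ satisfies $i(\gamma,\alpha) \leq k$ and $i(\gamma,\beta) \leq k$, so $\gamma$ is at distance $1$ from each of $\alpha,\beta$ in $\mathcal{C}_k(S)$ (or equals one of them, which is harmless). Hence $|\Omega| \leq |\mathcal{L}_k(\alpha) \cap \mathcal{L}_k(\beta)| \leq r_S(k)$ by Theorem \ref{KLink}, where $r_S$ grows at most quasi-polynomially — in particular sub-exponentially — in $k$.

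The third and final step is to invoke the lower bound construction. In \cite{Aoug1} one constructs, for the fixed surface $S$, a sequence of $k$-systems whose cardinality is bounded below by an exponential function of $k$; this shows $N_k(S) \succ 2^{ck}$ for some $c>0$ depending only on $S$. Comparing, for all sufficiently large $k$ we get $r_S(k) < N_k(S)$, contradicting $|\Omega| = N_k(S)$. Therefore no such pair $\alpha,\beta$ exists, and $\Omega$ projects to a subset of $\mathcal{C}_0(S)$ of diameter at most $2$. (If one wants diameter exactly $2$ rather than at most $2$: for large $k$ the clique $N_k(S)$ is certainly not contained in a single $1$-ball of $\mathcal{C}_0(S)$, e.g. because any set of pairwise-disjoint curves has size $\leq 3|\chi(S)|$, far smaller than the exponential lower bound, so the diameter cannot be $0$ or $1$.)

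I do not expect any genuine obstacle here, since all the analytic work is already packaged into Theorem \ref{KLink}; the only points requiring a sentence of care are the ``distance $\geq 3 \Rightarrow$ filling'' implication and making sure the exponential lower bound from \cite{Aoug1} is quoted for the \emph{same} fixed $S$, so that the quasi-polynomial versus exponential comparison is legitimate for $k \gg 0$.
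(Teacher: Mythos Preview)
Your proof is correct and matches the paper's argument essentially line for line: assume a pair $\alpha,\beta$ at distance $\geq 3$, observe they fill, apply Theorem~\ref{KLink} to bound $|\Omega|$ sub-exponentially, and contradict the exponential lower bound from \cite{Aoug1}. The only addition is your parenthetical remark ruling out diameter $\leq 1$, which the paper omits.
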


Also demonstrated in the introduction was the uniform quasi-polynomial bound on the growth of short simple closed geodesics on unit-square tiled surfaces:

\begin{corollary} \label{Sqtile} For $S$ a closed surface, there exists a function $P_{S}$ which grows at most quasi-polynomially, such that 
\[ \sup\left\{N_{\mathcal{S}}(L): \mathcal{S} \in \mathcal{X}(S) \right\} \leq P_{S}(L).\]
\end{corollary}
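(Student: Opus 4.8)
The plan is to reduce Corollary \ref{Sqtile} directly to Theorem \ref{KLink}, in its multi-curve formulation, by exhibiting for each unit-square tiled surface $\mathcal{S} \in \mathcal{X}(S)$ a single filling pair of multi-curves with respect to which every short simple closed geodesic on $\mathcal{S}$ has controlled intersection number. Concretely, given $\mathcal{S}$, let $v$ (resp. $h$) be the multi-curve obtained by concatenating the vertical (resp. horizontal) midsegments of all the squares of $\mathcal{S}$. The first step is to verify that $v$ and $h$ are genuine essential multi-curves on $S$, that they are distinct, and that together they fill $S$: the hypothesis that vertical edges glue only to vertical edges, and horizontal only to horizontal, is exactly what forces the midsegments to close up into simple closed curves rather than arcs, and every complementary component of $v \cup h$ is contained in a single square (it is one of the four quarter-squares), hence is a disk or once-punctured disk. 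After discarding parallel duplicate components of $v$ and of $h$ — which only decreases intersection numbers with any other curve — we may assume $v$ and $h$ each have pairwise non-homotopic components.

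Next I would bound $N_{\mathcal{S}}(L)$ by a link-intersection quantity. If $\gamma$ is a simple closed homotopy class with a minimal-length representative of length $\ell \le L$ on $\mathcal{S}$, then pushing $\gamma$ onto the $1$-skeleton of the tiling increases its length by a factor of at most $\sqrt{2}$, since a geodesic segment crossing a unit square has length at most $\sqrt 2$ and can be homotoped across the at most two boundary edges it meets. A curve lying on the $1$-skeleton crosses the midsegment of a square once for each skeleton edge it traverses, up to bounded error, so $i(\gamma, v) + i(\gamma, h)$ is comparable to the combinatorial length of $\gamma$ on the $1$-skeleton, and hence at most $\lceil \sqrt 2 L\rceil$ after absorbing constants (one can be crude here, since quasi-polynomial growth is stable under rescaling the argument by a constant). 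Therefore $\gamma \in \mathcal{L}_{\lceil \sqrt 2 L\rceil}(v) \cap \mathcal{L}_{\lceil\sqrt 2 L\rceil}(h)$, and consequently
\[ N_{\mathcal{S}}(L) \le \bigl| \mathcal{L}_{\lceil \sqrt 2 L\rceil}(v) \cap \mathcal{L}_{\lceil \sqrt 2 L\rceil}(h)\bigr|. \]

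Finally, I would invoke Theorem \ref{KLink} — in the multi-curve version recorded in the remark following its statement — applied to the filling pair $(v,h)$ with $k = \lceil \sqrt 2 L\rceil$, yielding $|\mathcal{L}_{k}(v)\cap \mathcal{L}_k(h)| \le r_S(k)$ with $r_S$ depending only on the topology of $S$ and growing at most quasi-polynomially; setting $P_S(L) := r_S(\lceil \sqrt 2 L\rceil)$ then finishes the proof, since precomposing a quasi-polynomial function with $L \mapsto \lceil \sqrt 2 L\rceil$ remains quasi-polynomial. The point that makes this more than a routine application of Rivin/Mirzakhani-type counting (cf. Remark \ref{ExplainingTile}) is the \emph{uniformity} over $\mathcal{X}(S)$: the pair $(v,h)$ varies wildly with $\mathcal{S}$, and a fixed-surface count would give constants that blow up as the square-tiled surface degenerates. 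The content of Theorem \ref{KLink} is precisely that the link-intersection bound is uniform over all filling pairs, so no such degeneration occurs. The only genuinely technical obstacle is confirming the multi-curve generalization of Theorem \ref{KLink}: checking that the Step 1 reduction and the Choi--Rafi/Rivin machinery in its proof go through verbatim when $\alpha,\beta$ are allowed to be multi-curves, and in particular that the augmented $B$-markings $\mu_\alpha,\mu_\beta$ can still be arranged so that $i(\mu_\alpha,\mu_\beta)\asymp i(\alpha,\beta)$.
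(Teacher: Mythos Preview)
Your proposal is correct and follows essentially the same argument as the paper: construct the vertical and horizontal midsegment multi-curves $v,h$, push short curves onto the $1$-skeleton at a cost of at most a $\sqrt{2}$ factor in length, conclude that any short curve lies in $\mathcal{L}_{\lceil\sqrt{2}L\rceil}(v)\cap\mathcal{L}_{\lceil\sqrt{2}L\rceil}(h)$, and invoke the multi-curve version of Theorem \ref{KLink}. Your added remarks on filling and on the uniformity over $\mathcal{X}(S)$ are apt but do not depart from the paper's reasoning.
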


\end{document}